\documentclass[runningheads]{llncs}
\usepackage[T1]{fontenc}
\usepackage{graphicx}
\usepackage[utf8]{inputenc}
\usepackage{amsmath}
\usepackage{multirow}
\usepackage[dvipsnames]{xcolor}
\usepackage{amssymb}
\usepackage{float}
\usepackage{tikz}
\usepackage{mathtools}
\usepackage{hyperref}
\usepackage{enumerate}
\usepackage{etaremune}
\usepackage{comment}
\usepackage{tikz}
\usetikzlibrary{decorations.markings}

\newtheorem{q}{Question}
\newtheorem{conj}{Conjecture}

\newcommand{\vpl}{$\mathbf{VPL}$}

\begin{document}

\title{Visibly Pushdown Languages in Groups}
\author{Laura Ciobanu\inst{1,2}\and Daniel Turaev\inst{1}}
\authorrunning{L. Ciobanu and D. Turaev}
\institute{Technische Universit\"at Berlin, Institut f\"ur Mathematik, 10623 Berlin, Germany
\and Heriot-Watt University, Department of Mathematics, EH14 4AS Edinburgh, UK}

\maketitle    

\begin{abstract}
In this paper we explore the connections between the class of Visibly Pushdown Languages (\vpl) and the natural sets of words one can associate to a finitely generated group. We show that the word problem of a finitely generated group is \vpl ~exactly when the group is finite.

We also show that free reduction does not preserve \vpl, and that finding solutions to equations in a free group with \vpl ~constraints (as reduced words) is undecidable. We explore the structure of sets whose full preimage is \vpl, showing these are often recognisable sets. We conjecture that, in any group, this class is precisely the recognisable sets.

\keywords{Visibly Pushdown Languages  \and Groups \and Equations \and Language Constraints}
\end{abstract}

Finitely generated groups are a rich and natural platform for formal languages, since all the group elements of a finitely generated group $G$ can be represented as words over the finite alphabet that is the (inverse-closed) generating set of $G$. All generating sets in this paper will be assumed to be finite.

An important connection between groups and formal languages is given by the relation between the language complexity of the \emph{word problem} of a group $G$ and the algebraic nature of $G$, where the \emph{word problem} of $G$ over a generating set $\Sigma$ (see Section \ref{sec:prelim}) is the set of all the words over $\Sigma$ that represent the trivial element in $G$. The celebrated results of Anisimov \cite{MR301981_Anisimov} and Muller-Schupp \cite{MR710250_MullerSchupp} establish that the word problem is regular exactly when a group is finite, and is context-free exactly when the group is virtually free (see Section \ref{sec:WP}). It is therefore interesting to consider classes of languages between regular and context-free, and ask whether there are groups that have word problems of complexities strictly between regular and context-free. Several classes of languages related to the context-free ones have been found to depict the word problem of certain groups (see for example \cite{Ho+2018+9+15}, \cite{SprianoKropholler_MR4000593}, \cite{Salvati_MR3354791}), but there are few results on language classes strictly contained in context-free, such as \vpl ~considered here. 

The class \vpl\ of Visibly Pushdown Languages is a class of languages situated between regular and context-free, with robust closure properties (see Proposition \ref{prop:closure}) akin to regular languages, but expressive power close to that of the context-free ones; they are the accept languages of Visibly Pushdown Automata, which were introduced in the 1980s under the name `Input Driven Pushdown Automata' \cite{MR589022_IDPDAsMehlhorn}. The first more in depth study was by Alur and Madhusudan \cite{alurmadhusudanVPLs,MR2334452_AlurMadVPL2} who came up with the name `Visibly Pushdown Language'. This class of languages is of interest in computer science because it is rich enough to help in the context of formal verification, while also having a very low complexity membership problem \cite{goller_et_al:LIPIcs.STACS.2024.38}, among other desirable properties. There are also useful algebraic and logical points of view: for example, \vpl ~forms a Boolean Algebra, and there exists a simple monadic second order theory (MSO) expressively equivalent to \vpl ~\cite{alurmadhusudanVPLs}.

Visibly Pushdown Languages also feature in recent papers on Word Equations with constraints in free monoids (see Section \ref{section:WEs}). When solving equations in free groups or monoids, where determining satisfiability is a decidable problem, imposing regular constrains on the solutions preserves decidability \cite{MR2172984_WEsplusRATgrps}. Surprisingly, solving Word Equations in free monoids with Visibly Pushdown constraints becomes undecidable \cite{DayACloserLook_MR4761457,Day_etal_expressivepowerofstrings}, even if there is only a small jump in complexity between $\mathbf{REG}$ and \vpl. Moreover, \cite{DayACloserLook_MR4761457} and \cite{Day_etal_expressivepowerofstrings} show that \vpl ~is a very expressive class of languages in the realm of Word Equations with non-rational constraints, in particular regarding length constraints. Understanding equations with \vpl ~constraints in groups is another interesting direction of research motivating this paper, especially since equations with non-rational constraints in groups have only recently begun to be studied systematically (see \cite{CEL25} and \cite{CG24}).

In this paper we show that the word problem of a group is a Visibly Pushdown Language exactly when the group is finite (Theorem \ref{prop:WordProblem}), show that a Benois-type result about closure under free reduction does not hold for \vpl~(Proposition \ref{prop:VPLBenois}), and study \vpl s as constraints on Word Equations (Proposition \ref{prop:WEsplusVPL}). We explore the structure of $\mathbf{VPL}^\forall$ sets (Section \ref{sec:vplforall}), showing these frequently coincide with recognisable sets. We further conjecture that the class $\mathbf{VPL}^\forall$ is precisely the class of recognisable sets over any group. Finally, we show that solving equations in free groups with certain \vpl~constrains is undecidable (Proposition \ref{prop:WEsplusVPL}), and suggest research directions on this topic.

\section{Preliminaries}\label{sec:prelim}

Throughout this paper, $X$ will refer to a finite generating set for a group, and $\Sigma$ will refer to the inverse closure of $X$, that is, for each element $x\in X$, there are two formal symbols $x, x^{-1} \in \Sigma,\,x\neq x^{-1}$. 
We will write it as $\Sigma = X \cup X^{-1}$.

In general, $\Sigma$ will also be the input alphabet of the automata we work with. 

We will assume some familiarity with regular and context-free languages, as well as their associated finite-state automata and pushdown automata respectively. A comprehensive introduction can be found, for example, in \cite{hopcroft_book_introtoautomata}

\subsection{VPLs and their closure properties}
\begin{definition}[Visibly Pushdown Automaton]
    A pushdown automaton on an alphabet $\Sigma$ with stack alphabet $\Gamma$is called a \emph{visibly pushdown automaton} if the alphabet $\Sigma$ can be partitioned into three components, as $\Sigma = \Sigma_c\sqcup\Sigma_i\sqcup\Sigma_r$, such that the only allowed transitions upon reading $x \in \Sigma$ are as follows:
    \begin{itemize}
        \item if $x \in \Sigma_i$, then the state transition cannot depend on, or modify, the stack.
        \item if $x \in \Sigma_c$, then the state transition again cannot depend on the stack, but it modifies the stack by adding (`pushing') a single symbol. 
        \item if $x \in \Sigma_r$, then the state transition must read the topmost symbol on the stack, and remove it (`pop' it) without pushing anything onto the stack.
    \end{itemize}

Furthermore, $\varepsilon$-transitions are not allowed. The three sets $\Sigma_c, \,\Sigma_i,\, \Sigma_r$ are the sets of ``call, internal, and response" letters, respectively. 
\end{definition}

By convention, we insist that a unique `bottom of the stack' letter $\bot \in \Gamma$ exists, and the stack is empty if only this symbol is on the stack. The symbol $\bot$ cannot be pushed onto the stack by a call letter; $\bot$ may be popped by a response letter, but if this occurs it is not removed from the stack.

\begin{remark}\label{remark:VPLinclusions}
The above criterion for an automaton to be visibly pushdown may be defined just as well on deterministic or nondeterministic automata. A priori, these need not be equivalent, but it turns out that they are, as all visibly pushdown automata are determinisable (see Theorem 2 of \cite{alurmadhusudanVPLs}).

Further, by taking a partition $\Sigma_c = \Sigma_r = \emptyset,\,\Sigma_i = \Sigma$, we see that all finite-state automata are visibly pushdown automata. 
\end{remark}

\begin{definition}[Visibly Pushdown Language]\label{def:VPL}
A language is called \emph{visibly pushdown} if it is the accept language\footnote{a VPA accepts a word if the final state when reading the word is an accepting state. Namely, there is no restrictions on the contents of the stack -- it may be non-empty.} of a visibly pushdown automaton. We denote the class of Visibly Pushdown Languages by $\mathbf{VPL}$.
\end{definition}

\begin{example}\label{ex:vpl}
\ \begin{enumerate}

    \item[(i)] 
 For $\Sigma = \{a,b\}$, partitioned into $\Sigma_c =\{a\}, \Sigma_i = \emptyset, \Sigma_r = \{b\}$, the language $\{a^nb^n: n \in \mathbb{N}\}$ is \vpl. Note that this language is not regular.
 
 \item[(ii)] The language $L$ over $\{a,b\}$ of words with equal number of $a$'s as the number of $b$'s is not \vpl ~for any partition of the alphabet.  Note that $L$ is deterministic context-free. 
 
 Indeed, suppose $V$ is a visibly pushdown automaton accepting $L$. Since $L$ is not regular, exactly one of the two letters must be a call, and the other a response. Suppose $a$ is a call, and $b$ is a response.  There are $n \neq m$ such that reading $b^n$ and $b^m$  will result in the same configuration of $V$. Since $b^na^n$ is accepted by $V$, $b^ma^n$ must also be accepted by $V$ -- a contradiction.
 \item[(iii)] Analogously, over the alphabet $\{a,a^{-1}\}$, the word problem of $\mathbb{Z}$ is not \vpl, as this is the language of words with the same number of $a$'s as $a^{-1}$'s. 
    \end{enumerate}
\end{example}

 Remark \ref{remark:VPLinclusions} and Example \ref{ex:vpl} show the following inclusion of language classes: $$\mathbf{REG}\subsetneq \mathbf{VPL}\subsetneq \mathbf{DCF}\subsetneq\mathbf{CF},$$ where $\mathbf{REG}$ is the class of regular languages, $\mathbf{DCF}$ the class of deterministic context-free languages, and $\mathbf{CF}$  the class of context-free languages.

\begin{proposition}[Theorem 1 of \cite{alurmadhusudanVPLs}]\label{prop:closure}
    The class of $\mathbf{VPL}$s over a fixed partition of $\Sigma$ is closed under the operations of union, intersection, concatenation, Kleene Star, complement, and renaming. That is to say, if $L_1, L_2$ are $\mathbf{VPL}$ over a common partition $\Sigma = \Sigma_c\sqcup\Sigma_i\sqcup\Sigma_r$, and $f: \Sigma \to \Sigma'$ is a map sending calls to calls, responses to responses, and internals to internals, then $$L_1\cup L_2, L_1\cap L_2, L_1\cdot L_2, L_1^\star, L_1^C,\text{ and }f(L_1)$$ are all Visibly Pushdown Languages.
\end{proposition}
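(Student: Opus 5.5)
The plan is to build, for each operation, an explicit visibly pushdown automaton over the same partition $\Sigma = \Sigma_c\sqcup\Sigma_i\sqcup\Sigma_r$. Two constructions come first and everything else reduces to them: a \emph{product} construction and determinisation (Remark \ref{remark:VPLinclusions}).

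\textbf{Union and intersection.} Take VPAs $A_1,A_2$ with stack alphabets $\Gamma_1,\Gamma_2$. Build a product automaton with state set $Q_1\times Q_2$ and stack alphabet $\Gamma_1\times\Gamma_2$. This is possible precisely because the partition is shared: on a call letter both automata push, on a response letter both pop, and on an internal letter neither touches the stack. So the two stacks always have the same height and can be stored as one stack of pairs. The bottom symbol is $(\bot,\bot)$, and popping it behaves as in each factor. The accepting set is $F_1\times F_2$ for intersection and $(F_1\times Q_2)\cup(Q_1\times F_2)$ for union.

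\textbf{Complement.} Since acceptance ignores the stack, determinise (Theorem 2 of \cite{alurmadhusudanVPLs}), complete the automaton with a sink state, and swap accepting and non-accepting states.

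\textbf{Renaming.} Let $f$ respect the partition. Define a nondeterministic VPA over $\Sigma'$ which, on reading $y$, guesses some $x\in f^{-1}(y)$ and simulates the transition of $A_1$ on $x$. The type of the stack action is determined by $y$, because $f$ preserves types, so the result is a visibly pushdown automaton.

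\textbf{Concatenation and Kleene star.} These are the main obstacle. The automaton for $L_1\cdot L_2$ must guess the split point nondeterministically, with no $\varepsilon$-moves, and then switch to simulating $A_2$. The difficulty is that the stack still holds symbols pushed by $A_1$. Response letters in the $L_2$ part may pop those symbols, yet $A_2$ would expect to see its own bottom $\bot$ there.

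The fix is to tag stack symbols with the phase that pushed them. When running $A_2$, any pop of a phase-$1$ symbol is treated by $A_2$ as a pop of $\bot$. The phase-$1$ symbol is really removed, which is harmless because $A_1$ is no longer active. The phase switch is merged into the first letter of the second factor: from a state in $F_1$, that letter is read by a transition of $A_2$ out of its initial state. The case $\varepsilon\in L_2$ is handled by accepting in $F_1$.

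For $L_1^\star$, tag each stack symbol as belonging to a "previous iteration". Pops of such symbols are treated as pops of $\bot$. On returning from a state in $F_1$, the automaton restarts from the initial state on the next letter, and $\varepsilon$ is accepted by making a fresh initial state accepting.

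The step needing the most care is checking that pops reaching below the current factor's own pushes correctly mimic the empty-stack behaviour of $A_i$. The point to verify is that $A_i$ on an empty stack reads $\bot$ and leaves it in place, while here an old symbol is read and removed. Since $A_i$ sees $\bot$ in both situations, its behaviour is identical, so the accepted languages are as required.
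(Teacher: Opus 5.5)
The paper gives no proof of this statement; it only cites Theorem 1 of Alur--Madhusudan. Your constructions for intersection, complement (determinise, complete, swap final states), renaming and concatenation are the standard ones. The phase tag works for $L_1\cdot L_2$ because a symbol's phase is fixed at push time, and every phase-$1$ symbol stays ``old'' once $A_2$ is running. One small point: the product for union needs both automata to be complete, as you arrange for complement. Otherwise a word of $L_1$ on which $A_2$ has no run is lost.

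The genuine gap is $L_1^\star$. A symbol pushed during the current iteration only becomes a ``previous-iteration'' symbol later, when a new iteration starts. It cannot carry that tag at push time, and a pushdown automaton cannot retag symbols below the top of the stack at the restart. A fixed finite set of iteration tags does not help either. With parity tags, an iteration that pushes nothing makes symbols from two iterations earlier look current. For example, let $c$ be a call, $i$ internal and $r$ a response, and let $A_1$ accept exactly $\{c,i,r\}$, where from the initial state $r$ leads to acceptance only when the top is $\bot$. Then $c\,i\,r\in L_1^\star$ is rejected.

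So the real difficulty is not making pops below the current factor mimic $\bot$; that is easy once you know you are there. The difficulty is \emph{detecting} that you are there, and your proposal gives no mechanism for it.

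The standard fix uses the fact that the pending pushes of the current iteration always form a top segment of the stack. Keep a bit $b$ in the control state recording whether this segment is nonempty.
\begin{itemize}
\item On a call, push $(\gamma,b)$ and set $b:=1$.
\item On a return with $b=1$, pop $(\gamma,b')$, simulate $A_1$ popping $\gamma$, and set $b:=b'$.
\item On a return with $b=0$, the top symbol comes from an earlier iteration (or is $\bot$). Simulate $A_1$ reading $\bot$ and discard the symbol unless it is $\bot$ itself.
\item When restarting from a state of $F_1$, reset $b:=0$.
\end{itemize}
With this invariant, your check that $A_1$ sees $\bot$ exactly when its own stack is empty becomes correct, and the rest of your argument goes through.
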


Note that the restriction to a common partition of the alphabet is necessary, and without it the same closure properties will not all hold.
\begin{example}
    The languages $L_1 = \{a^nb^nc^*\}$ and $L_2 = \{a^\star b^nc^n\}$ are \vpl s, the first over the partition $\Sigma_c = \{a\}, \Sigma_r = \{b\}, \Sigma_i = \{c\}$, and the second over the partition $\Sigma_c = \{b\}, \Sigma_r = \{c\}, \Sigma_i = \{a\}$. The intersection $L_1\cap L_2 = \{a^nb^nc^n\}$ is not context-free, let alone \vpl.  Therefore there is no partition of $\Sigma = \{a, b, c\}$ such that both $L_1$ and $L_2$ are both visibly pushdown over this partition.
\end{example}

Similarly, the restriction that $f$ respects the partition of the alphabet is essential, as the class $\mathbf{VPL}$ is not closed under general string morphisms, nor inverse morphisms. In fact, one can associate to any context-free language $L$, over an alphabet $\Sigma$, a Visibly Pushdown Language $L'$ over a marked alphabet $\Sigma' = \Sigma_c\sqcup\Sigma_i\sqcup\Sigma_r$ with $\Sigma_c =\Sigma\times\{c\},\,\Sigma_i=\Sigma\times\{i\},\,\Sigma_r= \Sigma\times\{r\}$,
such that under the string morphism $f$ sending $(x,\alpha)\in \Sigma'$ to $x$, we get $f(L') = L$. Details of this construction can be found in \cite{alurmadhusudanVPLs}.

The class \vpl ~is further closed under a quotient operation.
\begin{definition}[Quotients of languages]
    Let $K, L$ be languages. The \emph{right quotient} of $K$ by $L$ is the language $KL^{-1}=\{u: \exists v \in L: uv \in K\}.$
    
    Analogously, the \emph{left quotient} of $K$ by $L$ is $L^{-1}K = \{u: \exists v \in L: vu \in K\}.$
\end{definition}

\begin{proposition}[Theorem 1 of \cite{VPLquotientsMR3666382}]\label{prop:quotients}
    The class $\mathbf{VPL}$ is closed under left and right quotients. That is, for $K, L \in \mathbf{VPL}$, both $K\cdot L^{-1}$ and $L^{-1}\cdot K$ are visibly pushdown languages.
\end{proposition}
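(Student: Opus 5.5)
The plan is to work with deterministic visibly pushdown automata $A$ for $K$ and $B$ for $L$ over the common partition, which exist by Remark~\ref{remark:VPLinclusions}. Both quotients then reduce to one fact about pushdown systems. For a pushdown system, the set of configurations (state, stack word) that can reach a given regular set of configurations is regular, and so is the set of configurations reachable from one. This is the classical theorem of B\"uchi, made effective by the saturation constructions of Bouajjani--Esparza--Maler. I would use this as the key tool, applied to product automata such as $A\times B$, and then fold the resulting finite automaton on stack words into a new visibly pushdown automaton.

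\emph{Right quotient $KL^{-1}$.} A word $u$ lies in $KL^{-1}$ iff the configuration $(q,\sigma)$ of $A$ after reading $u$ satisfies a condition. The condition is that some $v$ drives the product $A\times B$ from $(q,q_0^B)$, with $A$'s stack $\sigma$ and $B$'s stack empty, into a pair of accepting states. The $B$-component always starts from the empty stack. Along $v$, unmatched returns pop $\sigma$ in $A$ and pop $\bot$ (harmlessly) in $B$, while calls push in both. So the product is again a visibly pushdown system, with stack alphabet $\Gamma_A\times\Gamma_B$ above a marker for ``old'' $A$-symbols. By the theorem above, the set $R_q$ of stacks $\sigma$ that admit such a $v$ is regular for each $q$. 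I would then build a visibly pushdown automaton $A'$ that simulates $A$ on $u$. Whenever $A'$ pushes a symbol, it pushes it paired with the state, after that symbol, of a DFA reading the stack from bottom to top. This is possible because reversal preserves regularity. Popping then restores the DFA state for the remaining stack. At the end, $A'$ knows $q$ and whether $\sigma\in R_q$, and it accepts exactly when $\sigma\in R_q$.

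\emph{Left quotient $L^{-1}K$.} Here $u\in L^{-1}K$ iff some configuration $(q,\sigma)$ of $A$ is reachable by a word $v$ that $B$ accepts, and $A$ accepts $u$ from $(q,\sigma)$. By the theorem applied to $A\times B$, the set $C$ of pairs $(q,\sigma)$ reached by words accepted by $B$ is regular. I would fix an NFA $N_q$ that reads the stack $\sigma$ top-down and recognises the stacks $\sigma$ with $(q,\sigma)\in C$. The new automaton guesses $q$ nondeterministically and then simulates $A$ on $u$, keeping the current state $p$ of $N_q$ in its control. On a return of $u$ that pops below $u$'s own pushes, it guesses the symbol $\gamma$ of $\sigma$ being popped, lets $A$ use $\gamma$, and advances $p$ along $\gamma$. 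On a call, it pushes $A$'s symbol together with the current $p$, so that $p$ is restored when that symbol is popped. It accepts if $A$ is in an accepting state and $p$ can still reach an accepting state of $N_q$, since the unread part of $\sigma$ may be chosen freely. The $\bot$ convention needs one more case: pops that hit the genuine bottom must correspond to $N_q$ having accepted already. All of this uses only the given partition, so the result is visibly pushdown over the same partition, by Proposition~\ref{prop:closure}-style product arguments.

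The main obstacle is making the regularity-of-reachable-configurations step precise in the visibly pushdown setting. In particular, the product $A\times B$ must be handled correctly when the two stacks have different depths relative to $\bot$, and the nonstandard $\bot$ semantics must be respected. I would handle this first by encoding ``old'' stack symbols with a separate marked alphabet, and only then run saturation.
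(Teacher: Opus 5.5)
The paper gives no proof of this proposition. It quotes it as Theorem~1 of the cited reference and uses it only for quotients by finite languages, in fact by single words, in the proof of Theorem~\ref{thm:sbgps}. Your proposal is therefore a genuinely independent, self-contained route, and it is sound.

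\begin{itemize}
\item You rightly read the statement over a common partition, which is how the paper uses it.
\item Taking $A$ deterministic makes membership in $KL^{-1}$ a property of the single configuration of $A$ after $u$. Membership in $L^{-1}K$ becomes a property of the set $C$ of configurations reachable by words of $L$.
\item The key observation is that $A$ and $B$ read the same word over the same partition, so their stacks move in lockstep. Hence $A\times B$ is an honest one-stack system, with the pre-existing $\sigma$ the only asymmetry, and your ``old'' marker handles exactly that.
\item This is what allows you to invoke regularity of $\mathrm{pre}^\ast$ and $\mathrm{post}^\ast$. Carrying the resulting regular stack condition inside a new automaton over the same partition is then routine.
\end{itemize}

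Compared with the citation, your route gives effectiveness and an explicit guarantee that the partition is unchanged. That guarantee is what the paper needs, since there the partition is dictated by $\pi^{-1}(S)$. The cost is the appeal to B\"uchi and Bouajjani--Esparza--Maler, which the citation avoids at the price of being a black box.

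For the case the paper actually uses, even that tool is unnecessary, because $KL^{-1}=\bigcup_{w\in L}Kw^{-1}$ and similarly on the left.
\begin{itemize}
\item $w^{-1}K$ amounts to starting $A$ in the fixed configuration reached after $w$. Keep that finite stack in the control and consume it whenever the new automaton pops its own $\bot$.
\item Whether $A$ accepts $w$ from $(q,\sigma)$ depends only on $q$ and the top $|w|$ symbols of $\sigma$. This finite window can be maintained by pushing, with each symbol, the window as it was before that push.
\end{itemize}

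Two small points about your constructions.
\begin{itemize}
\item In the right-quotient annotation, push together with $\gamma$ the DFA state of the stack \emph{below} $\gamma$, and keep the state that includes $\gamma$ in the control. As you phrased it, pushing ``the state after that symbol'' would not let a pop restore the state for the remaining stack.
\item In the left-quotient construction, storing $p$ with each call is harmless but redundant. The state $p$ can only change when the simulator is at its own $\bot$, that is, when none of $u$'s pushes remain on the stack.
\end{itemize}
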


Importantly, Proposition \ref{prop:quotients} holds when taking a quotient by any finite language, since these are regular, and hence \vpl ~for any partition of $\Sigma$.

\subsection{Congruences on VPLs}\label{section:congs}

The \emph{syntactic congruence} of a language $L$ is defined by $$u_1\approx_L u_2 \iff \forall v, \, w \in \Sigma ^\star: (vu_1w\in L \iff vu_2w \in L).$$ Such a congruence may be defined for any language. We say a congruence has \emph{finite index} if it has finitely many equivalence classes in $\Sigma^\star$. It is well known that the syntactic congruence of a language $L$ is finite index if and only if $L$ is regular. 
Similarly, the \emph{Myhill-Nerode congruence} is defined as $$u_1\sim_L u_2 \iff \forall v \in \Sigma ^\star: (u_1v\in L \iff u_2v \in L),$$ and also has finite index if and only if $L$ is regular. 

Congruences apply to \vpl s as well \cite{VPLcong_MR2184704}, where the Myhill-Nerode and syntactic congruences are modified according to the partition of $\Sigma$. 

\begin{definition}\label{defn:MCMRWM}\begin{enumerate}
    \item[(i)] Given a partition $\Sigma = \Sigma_c\sqcup\Sigma_i\sqcup\Sigma_r$ of an alphabet, the set of \emph{matched-response} words, denoted $\mathit{MR}(\Sigma)$, is the set of words $u$ with the property that in any prefix $u'$ of $u$, the number of response symbols is at most the number of call symbols. 
    
    \item[(ii)] Dually, the set of \emph{matched-call} words, denoted $\mathit{MC}(\Sigma)$, is the set of words $u$ with the property that in any suffix $u'$ of $u$, the number of call symbols is at most the number of response symbols. 
    \item[(iii)]The set of \emph{well-matched} words is the set $\mathit{WM}(\Sigma) = \mathit{MC}(\Sigma)\cap \mathit{MR}(\Sigma)$.
    \end{enumerate}
\end{definition}

Where $\Sigma$ is clear, we will write $\mathit{MR}, \mathit{MC},$ and $\mathit{WM}$ respectively. 

It is useful to consider languages consisting entirely of well-matched words. Adding this restriction to a \vpl\  provides a strictly smaller class of languages. We will call a language $L$ a ``well-matched \vpl'' if $L\subseteq \mathit{WM}$ and $L \in\mathbf{VPL}$.

The following three congruences are defined in \cite{VPLcong_MR2184704}.\footnote{We have used here the same notation from the original paper.} It is important to note that for the congruences $\sim_0$ and $\approx$, the congruences are not defined on all of $\Sigma^\star$.
\begin{align}
    u_1,\,u_2\in\Sigma^\star,\,&u_1\equiv u_2 \iff \forall v \in \mathit{MR}:(u_1v\in L \iff u_2 v \in L) \\
    u_1,\,u_2\in \mathit{MC}, \,&u_1 \sim_0 u_2 \iff \forall v \in\Sigma^\star:(u_1v\in L \iff u_2 v \in L) \\
    u_1,\, u_2 \in \mathit{WM}, \, &u_1 \approx u_2 \iff \forall v, w \in \Sigma^\star:(vu_1w \in L \iff vu_2 w \in L)
\end{align}
\begin{proposition}[Theorem 2 of \cite{VPLcong_MR2184704}]\label{prop:CongsVPLs}
    A language $L$ is $\mathbf{VPL}$ if and only if the congruences $\equiv,\, \sim_0,\, \approx$ are all finite index.
\end{proposition}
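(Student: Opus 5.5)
The plan is to prove the two directions separately. For ($\Rightarrow$) we bound the index of each congruence by a function of the number of states of a deterministic VPA, using Remark~\ref{remark:VPLinclusions} to determinise. For ($\Leftarrow$) we build a deterministic VPA whose states and stack symbols are congruence classes, using the canonical decomposition of a word into a matched-call prefix, pending calls, and well-matched blocks.

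For ($\Rightarrow$), fix a deterministic VPA $A$ for $L$ with state set $Q$. If $v\in \mathit{MR}$, then while reading $v$ the automaton never pops a symbol that was pushed before $v$ began. Hence whether $uv\in L$ depends only on the state reached after $u$, and possibly on whether the stack is empty, since popping $\bot$ behaves differently. Refining by this bit, $\equiv$ has index at most $2|Q|$. If $u\in \mathit{MC}$ then the stack after $u$ is $\bot$ alone, so the state determines the whole configuration and $\sim_0$ has index at most $|Q|$. A well-matched $u$ never looks below its own pushes and leaves the stack as it found it. It therefore induces a function $f_u\colon Q\to Q$, independent of the stack, with $f_{u_1}=f_{u_2}$ implying $u_1\approx u_2$. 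So $\approx$ has index at most $|Q|^{|Q|}$.

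For ($\Leftarrow$), we first record some algebraic facts, each verified directly from the definitions.
\begin{enumerate}
\item[(a)] $\equiv$ is a right congruence for single letters and for well-matched words, because $\sigma v\in\mathit{MR}$ and $zv\in\mathit{MR}$ whenever $v\in \mathit{MR}$, $\sigma\in\Sigma$ and $z\in\mathit{WM}$. The case $\sigma\in\Sigma_r$ should be checked carefully.
\item[(b)] $\sim_0$ is a right congruence inside $\mathit{MC}$.
\item[(c)] $\sim_0$ refines $\equiv$ on $\mathit{MC}$.
\item[(d)] $\approx$ is a congruence on $\mathit{WM}$; in particular $[cyr]_\approx$ is determined by $c$, $[y]_\approx$ and $r$.
\end{enumerate}

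Every word factors uniquely as $w=w_0c_1w_1\cdots c_kw_k$ with $w_0\in \mathit{MC}$, $c_i\in\Sigma_c$ pending, and $w_i\in \mathit{WM}$. The automaton has two kinds of state.
\begin{itemize}
\item When $k=0$ it is in state $[w]_{\sim_0}$.
\item When $k\ge1$ it is in state $([x]_\equiv, c_k, [w_k]_\approx)$, where $x=w_0c_1\cdots w_{k-1}$.
\end{itemize}
The transitions are as follows.
\begin{itemize}
\item An internal letter multiplies $w_k$ (or $w$) on the right, using (d) or (b).
\item A call letter $c$ pushes the current state and moves to $([x']_\equiv, c,[\varepsilon]_\approx)$ with $x'=xc_kw_k$. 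The class $[x']_\equiv$ is computable by (a), and when $k=0$ by (c).
\item A return letter $r$ pops the stored state $s$ describing $x$. It then forms $[c_kw_kr]_\approx$ via (d) and appends this well-matched block to the popped description via (a), (b), or (d).
\item When the stack is empty, a return letter is handled via (b).
\end{itemize}
There are finitely many states and stack symbols since all indices are finite.

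It remains to check acceptance, which is the main obstacle. We must show that membership $w\in L$ is determined by the state. When $k=0$ this is immediate from $\sim_0$ with $v=\varepsilon$. When $k\geq 1$, write $w=xc_kw_k$. Since $c_kw_k\in\mathit{MR}$, the class $[x]_\equiv$ controls membership after replacing $x$. Since $w_k\in\mathit{WM}$, the class $[w_k]_\approx$ controls membership after replacing $w_k$, with context $v=xc_k$, $w=\varepsilon$.

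The delicate point is ensuring that the pushed information is exactly the $\equiv$-class of the prefix before the last pending call. This requires verifying (a) for return letters, so that appending a block $c\,y\,r$ is legitimate. If (a) fails for returns, the fallback is to store the pair $([x]_\equiv, c_k)$ and recover $[xc_kw_kr]$ only through the well-matched block $c_kw_kr$, which uses (a) only for well-matched suffixes.
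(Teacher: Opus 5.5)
The paper does not prove this statement; it is quoted from Theorem 2 of \cite{VPLcong_MR2184704}, so there is no in-paper argument to compare against. Your proposal is a correct, self-contained proof along the standard lines.
\begin{itemize}
\item For ($\Rightarrow$), the state-counting bounds in a deterministic VPA work as you describe.
\item For ($\Leftarrow$), the machine with states $[w]_{\sim_0}$ (no pending calls) or $([x]_\equiv,c_k,[w_k]_\approx)$, which pushes its current state on each call, is well defined and deterministic.
\item Your acceptance check is exactly what is needed: use $x\equiv x'$ with the $\mathit{MR}$ suffix $c_kw_k$, then $w_k\approx w_k'$ in the context $xc_k$.
\end{itemize}

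There are two clean-ups.

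First, fact (a) is false for a response letter $\sigma$, since $\sigma v\notin\mathit{MR}$ when $v\in\mathit{MR}$. However, your construction never uses it. On a return with popped state $([x'']_\equiv,c_{k-1},[w_{k-1}]_\approx)$, the new state is $([x'']_\equiv,c_{k-1},[w_{k-1}c_kw_kr]_\approx)$. So the $\equiv$-component is simply copied, and only (d) is used. When the popped state is a $\sim_0$-class, (b) is used to append the well-matched block $c_1w_1r$. Fact (a) is needed only on calls, for the call letter $c_k$ and the well-matched word $w_k$, and there it holds (together with $\approx$ to replace $w_k$ by its class). Your ``fallback'' is therefore what your construction already does, and the worry in your last paragraph can be dropped.

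Second, in ($\Rightarrow$) the extra bit recording whether the stack is empty is unnecessary. An $\mathit{MR}$ suffix never pops a symbol it did not push itself, so $\equiv$ has index at most $|Q|$.
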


Note the similarity to the case of regular languages. Indeed, $\approx$ is precisely the syntactic congruence restricted to well-matched words. Likewise, $\sim_0$ and $\equiv$ are the Myhill-Nerode congruence, respectively restricted to \textit{MC} words or restricted to \textit{MR} suffixes.

\subsection{Languages in Groups}

There are many languages of interest in groups beyond the word problem. A key observation is that when representing elements by words in finitely generated monoids or groups, one often needs to decide which word(s) should represent a given element from potentially infinitely many choices.
In free monoids and groups there are canonical choices, even unique ones in the monoid case. However, in non-free structures we need to pick either certain normal forms, or certain well-defined sets: we associate to subsets of a group $G$, generated by a set $X$, languages in the free monoid $\Sigma^\star = (X\cup X^{-1})^\star$ via the canonical projection $\pi: \Sigma^\star \to G$ sending each letter $a \in \Sigma$ to $a \in X\cup X^{-1}\subset G$ and extended in the natural way to a monoid homomorphism. 

 Given a language class $\mathcal{C}$, we  associate two notions of $\mathcal{C}$-sets to a group $G$. 
 \begin{definition}
     A set $S\subseteq G$ is $\mathcal{C}^\forall$ if the full preimage $\pi^{-1}(S)\subseteq \Sigma^\star$ is in $\mathcal{C}$, and $\mathcal{C}^\exists$ if there is some $L \in \mathcal{C}$,  such that $\pi(L) = S$.
 \end{definition}

  This notation is established in \cite{carvalho2025linguisticsubsetsgroupsmonoids}, and highlights the fact that these two language classes are defined by the following formula:$$S\in \mathcal{C}^\Box \iff \exists L\in\mathcal{C}, \forall s \in S, \Box x \in \pi^{-1}(s): (\pi(L) = S \land x \in L), $$
where $\Box$ is $\forall$ and $\exists$, respectively.

When $\mathcal{C}= \mathbf{REG}$, the standard terminology in the literature is to call a  $\mathbf{REG}^\forall$ set \emph{recognisable}, and a $\mathbf{REG}^\exists$ set \emph{rational}. Various (often contradictory) terms have also been used for $\mathbf{CF}^\forall$ and $\mathbf{CF}^\exists$ sets.
The following algebraic characterisation of recognisable sets is due to Herbst and Thomas \cite[Section 6]{HERBST1993187}. \begin{proposition}[Algebraic characterisation of recognisable sets]\label{prop:recsetsareficosets} In any group $G$, the recognisable sets are finite unions of cosets of a finite index normal subgroup of $G$. That is, if  $\pi^{-1}(K)$ is regular for some $K \subseteq G$, then there exists a finite index normal subgroup $N$ such that $$K = \bigcup_{i = 1,\dots, n} N g_i:\   N\trianglelefteq_{f.i.} G, g_i \in G.$$
\end{proposition}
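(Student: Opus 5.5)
Let $K\subseteq G$ be recognisable, so $L=\pi^{-1}(K)$ is regular. The plan is to produce a finite index normal subgroup $N$ of $G$ such that $K$ is a union of cosets of $N$; finitely many cosets then suffice automatically, because $N$ has finite index.

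\emph{Step 1: build a finite monoid.} Take the syntactic congruence $\approx_L$ on $\Sigma^\star$. Since $L$ is regular, $\approx_L$ has finite index, so $M=\Sigma^\star/\approx_L$ is a finite monoid. Let $\phi:\Sigma^\star\to M$ be the quotient map. By the definition of $\approx_L$, $L$ is a union of $\approx_L$-classes, so $L=\phi^{-1}(\phi(L))$.

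\emph{Step 2: the projection factors through $M$.} The point is that $\pi(u)=\pi(u')$ implies $u\approx_L u'$. Indeed, if $\pi(u)=\pi(u')$ then for all $v,w\in\Sigma^\star$ we have $\pi(vuw)=\pi(vu'w)$. Since $L$ is a full preimage, $vuw\in L$ if and only if $vu'w\in L$. Hence there is a well-defined map $\psi:G\to M$ given by $\psi(\pi(u))=\phi(u)$. This map is a monoid homomorphism, and it is surjective because $\pi$ is surjective.

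\emph{Step 3: the image is a group.} The homomorphic image of a group is a group. Concretely, for each letter $x$, $\psi(x)\psi(x^{-1})=\psi(1)$ is the identity of $M$, so every generator's image is invertible. Thus $M$ is a finite group.

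\emph{Step 4: read off $N$.} Put $N=\ker\psi$. It is a normal subgroup of index $|M|<\infty$. Moreover
\[
K=\pi(L)=\pi\bigl(\phi^{-1}(\phi(L))\bigr)=\psi^{-1}(\phi(L)).
\]
A preimage under $\psi$ is a union of cosets of $N$, and there are at most $|M|$ of them. This gives $K=\bigcup_{i=1}^n Ng_i$.

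The main obstacle, mild as it is, is Step 2. It uses the full-preimage hypothesis in an essential way: without it the syntactic congruence need not be coarser than the relation $\pi(u)=\pi(u')$. Step 2 is also what guarantees that $\psi$ is well defined on $G$, rather than only on $\Sigma^\star$.
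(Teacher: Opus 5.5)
The paper does not prove this proposition; it quotes it from Herbst and Thomas, so there is no internal argument to compare yours with. Your proof is correct and complete for the direction asserted in the ``That is'' clause, and it is the standard syntactic-monoid argument. The one substantive point is your Step~2: $\pi(u)=\pi(u')$ forces $u\approx_L u'$ because $L$ is a full preimage, so $\phi=\psi\circ\pi$ and $M$ is a finite quotient group of $G$.

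There is one small misattribution. $\psi$ is surjective because $\phi$ is, since $M=\phi(\Sigma^\star)=\psi(\pi(\Sigma^\star))$. Surjectivity of $\pi$ is what makes $\psi$ defined on all of $G$, and what gives $\pi(\pi^{-1}(A))=A$ in Step~4.

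Your route also yields a little more than stated. $N=\ker\psi$ is the largest normal subgroup $N'$ with $KN'=K$: for $n\in N'$ one has $\pi(v)n\pi(w)\in K\iff\pi(v)\pi(w)\in K$, so $n$ maps to $1$ in $M$. Thus $G/N$ is precisely the syntactic monoid of $L$.

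The other common proof uses only the right (Myhill--Nerode) congruence $\sim_L$, which is the congruence the paper itself works with later. Finite index of $\sim_L$ means $K$ has only finitely many translates $gK$. So $H=\{g\in G: gK=K\}$ has finite index, and $K=HK$ is a finite union of right cosets of $H$; one then passes to the normal core of $H$. That route needs the extra core step to obtain normality, which you get for free as a kernel; in fact both constructions produce the same $N$.

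Finally, the converse implicit in the word ``characterisation'' is immediate. If $K=\bigcup_i Ng_i$ with $N$ normal of finite index, take the Cayley graph of $G/N$ with start state $N$ and accepting states the cosets $Ng_i$. This is a finite automaton accepting $\pi^{-1}(K)$.
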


In the following we denote by $F(X)$ the free group generated by finite set $X$. As is standard, for any word $w$ over $X\cup X^{-1}$, $w$ is \emph{reduced} if there are no occurrences of $a a^{-1}$ in $w$, where $a \in X\cup X^{-1}$. In any free group, the set of reduced words is a regular language in $(X\cup X^{-1})^\star$.
\begin{definition}[Reduction map]
    Let $S\subseteq \Sigma^\star$ be a set, and let $\pi(S)$ be the image of $S$ in $F(X)$. The \emph{reduction map} $r$ associates to $S$ the set $r(S)\subset \Sigma^\star$ of reduced words with the same image as $S$. Or, in symbols: $$r(S) = \{w \in (X\cup X^{-1})^\star : \text{ $w$ is reduced and } \pi(w)\in \pi(S)\}.$$  
\end{definition}

\begin{proposition}\label{prop:Benois&Herbst}
\begin{itemize}
    \item[(1)] 
\label{prop:Benois}
    A set $K\subset F(X)$ is rational (i.e. $\mathbf{REG}^\exists$) if and only if $r(K)$ is regular. [Benois, \cite{benois_MR265496}]

\item[(2)] \label{prop:Herbst}
    A set $K\subset F(X)$ is $\mathbf{CF}^\forall$ if and only if $r(K)$ is context-free. [Herbst, \cite{Herbst_MR1119044}]
    \end{itemize}
\end{proposition}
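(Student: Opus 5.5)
The statement collects two classical results, and for each part one direction is immediate; the work lies in the other direction. In both parts I would exploit the regular set $\mathit{Red}\subseteq\Sigma^\star$ of reduced words. Every element of $F(X)$ has a unique reduced representative, so $r(K)=\pi^{-1}(K)\cap \mathit{Red}$ for any $K\subseteq F(X)$.

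For (1), suppose first that $r(K)$ is regular. Then $\pi(r(K))=K$, so $K$ is rational. Conversely, let $K=\pi(L)$ with $L$ accepted by a finite automaton $\mathcal{A}$. I would \emph{saturate} $\mathcal{A}$ as follows. Whenever there is a path $p\xrightarrow{a}p'\xrightarrow{a^{-1}}q$ (possibly using $\varepsilon$-edges already added), add an $\varepsilon$-edge $p\to q$. Repeat until no new edge appears. This terminates because there are only finitely many pairs of states. The resulting automaton $\mathcal{A}'$ has two properties, each checked by induction.
\begin{itemize}
\item[(a)] Every word accepted by $\mathcal{A}'$ is freely equal to a word of $L$, since each $\varepsilon$-edge replaces a path whose label is trivial in $F(X)$.
\item[(b)] Every word obtained from a word of $L$ by successively deleting factors $aa^{-1}$ is accepted by $\mathcal{A}'$. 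In particular the free reduction of each $w\in L$ is accepted.
\end{itemize}
Hence $L(\mathcal{A}')\cap \mathit{Red}=r(K)$, which is regular. The only delicate point is the induction for (b): one deletion step is simulated by an added $\varepsilon$-edge, and after saturation the automaton is closed under further steps.

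For (2), suppose first that $\pi^{-1}(K)$ is context-free. Then $r(K)=\pi^{-1}(K)\cap\mathit{Red}$ is context-free, because $\mathbf{CF}$ is closed under intersection with regular languages. For the converse, let $D=\pi^{-1}(1)$ be the word problem of $F(X)$, which is context-free. I would prove the factorisation lemma: a word $w$ freely reduces to the reduced word $u=a_1\cdots a_n$ if and only if
$$w\in D\,a_1\,D\,a_2\cdots D\,a_n\,D.$$
The ``if'' direction is clear, since each factor in $D$ is trivial in $F(X)$. For ``only if'', fix a sequence of cancellations taking $w$ to $u$ and mark the letters of $w$ that are never cancelled. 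These are exactly $a_1,\dots,a_n$ in order. Each maximal block of cancelled letters between consecutive surviving letters is cancelled within itself, because cancellations never cross a surviving letter. So each such block lies in $D$.

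Given the lemma, $\pi^{-1}(K)=\sigma(r(K))\cdot D$, where $\sigma$ is the substitution $a\mapsto D\,a$. Since $\mathbf{CF}$ is closed under substitution by context-free languages and under concatenation, $\pi^{-1}(K)$ is context-free.

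I expect the main obstacle to be the rigorous form of the factorisation lemma in (2): one must argue carefully that cancelled letters pair up without crossing surviving letters. The analogous obstacle in (1) is the saturation invariant (b).
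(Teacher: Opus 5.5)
Your proof is correct and complete. The paper gives no argument of its own for this proposition; it simply quotes Benois and Herbst, so what you have written is a self-contained replacement for the two citations rather than a competing route.

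Part (1) is the classical Benois saturation construction. You correctly allow previously added $\varepsilon$-edges to sit between the $a$ and the $a^{-1}$, and this is exactly what lets invariant (b) survive nested cancellations such as $abb^{-1}a^{-1}$.

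In part (2) the non-crossing argument you sketch is right. When two letters cancel they are adjacent in the current word, so every original letter between them has already been deleted. Hence they lie in the same block, and the cancellation sequence restricts to a valid reduction of each block to the empty word. This gives $\pi^{-1}(K)=\sigma(r(K))\cdot D$.

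Beyond self-containment, your route isolates the ingredients being used: $\varepsilon$-transitions in (1), and membership of $D$ in the class plus closure under substitution in (2). These are precisely what break for $\mathbf{VPL}$. A visibly pushdown automaton cannot shortcut $aa^{-1}$ when, say, $a$ is a call and $a^{-1}$ is internal, since the stack height changes; this is the mechanism behind the second example in Proposition~\ref{prop:VPLBenois}. Likewise, $D$ is not $\mathbf{VPL}$ (Example~\ref{ex:vpl}(iii), Theorem~\ref{prop:WordProblem}), which is the content of the first example there.
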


\section{The Word Problem}\label{sec:WP}

For a group $G=\langle X \rangle$\footnote{We write $G = \langle X\rangle$ if $X$ is a generating set for the group $G$.} with identity (or trivial) element $1$, the \emph{word problem} of $G$, written $\mathit{WP}(G,X),$ is the set of all $w\in\Sigma^*$ such that $\pi(w)=1$ (recall that $\Sigma=X\cup X^{-1}$). In other words, $L = \pi^{-1}(1)$.

We recall that a group is \emph{virtually free} if it has a free subgroup of finite index.

\begin{proposition}
\begin{itemize}
    \item[(1)] 
    A group $G=\langle X \rangle$ has regular word problem, i.e. $\mathit{WP}(G,X)\in \mathbf{REG}$, if and only if $G$ is finite.[Anisimov, \cite{MR301981_Anisimov}]
\item[(2)]
A group $G = \langle X \rangle$ has context-free word problem, i.e. $\mathit{WP}(G,X)\in\mathbf{CF}$, if and only if $G$ is virtually free.[Muller-Schupp, \cite{MR710250_MullerSchupp}]\label{Prop:MullerSchupp}
    \end{itemize}
\end{proposition}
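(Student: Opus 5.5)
We prove the two parts separately. Part (1) is a direct finite-state argument via the Myhill--Nerode congruence. For part (2) we build a pushdown automaton for one direction and, for the other, derive a geometric property of the Cayley graph that forces the group to be virtually free.

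\emph{Part (1).} If $G$ is finite, the Cayley graph of $G$ with respect to $X$ is itself a deterministic finite automaton. Its states are the elements of $G$, and the transition on $a\in\Sigma$ sends $g$ to $g\pi(a)$. Taking the identity $1$ as both start state and unique accept state, it accepts exactly $\mathit{WP}(G,X)$.

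Conversely, suppose $\mathit{WP}(G,X)$ is regular, so its Myhill--Nerode congruence $\sim$ has finite index. We show that $u\sim v$ if and only if $\pi(u)=\pi(v)$.
\begin{itemize}
\item If $\pi(u)=\pi(v)$, then for every $w$ we have $\pi(uw)=\pi(vw)$, so $uw\in\mathit{WP}$ if and only if $vw\in\mathit{WP}$.
\item If $u\sim v$, choose a word $w$ with $\pi(w)=\pi(u)^{-1}$; such a word exists because $\Sigma$ is inverse-closed. Then $uw\in\mathit{WP}$, hence $vw\in\mathit{WP}$, and therefore $\pi(v)=\pi(u)$.
\end{itemize}
So the classes of $\sim$ are in bijection with the elements of $G$, and $G$ is finite.

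\emph{Part (2), virtually free implies context-free.} Let $F\le G$ be free of finite index, and fix a finite right transversal $T$ containing $1$. The pushdown automaton keeps the current coset representative $t\in T$ in its finite control. On reading a letter $a$ it writes $ta=h\,t'$ with $h\in F$ and $t'\in T$; by Reidemeister--Schreier, $h$ is a fixed word over a free basis of $F$. The automaton then freely reduces $h$ against the top of the stack, which holds a reduced word in that basis. A single letter may push or pop several symbols, but only boundedly many, which an ordinary pushdown automaton can simulate. The automaton accepts exactly when the state is $t=1$ and the stack is empty, that is, when the word represents $1$.

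\emph{Part (2), context-free implies virtually free.} This is the hard direction and the main obstacle. The plan follows Muller and Schupp.
\begin{enumerate}
\item Take a context-free grammar for $\mathit{WP}(G,X)$ in Chomsky normal form.
\item Parse trees of this grammar show that every closed path in the Cayley graph $\Gamma$ admits a $K$-triangulation: a decomposition by diagonals into triangles whose sides have length at most a fixed constant $K$. In particular $G$ is finitely presented.
\item The triangulations imply that the ends of $\Gamma$ come in finitely many types, so $\Gamma$ is quasi-isometric to a tree.
\item If $G$ is infinite, $\Gamma$ therefore has more than one end. Stallings' theorem then splits $G$ as an amalgamated product or HNN extension over a finite subgroup.
\item The factors again have context-free word problem, being finitely generated subgroups whose word problem is an intersection with a regular set.
\item To ensure the iterated splitting terminates, invoke Dunwoody's accessibility theorem for finitely presented groups.
\item The result is a finite graph of groups with finite vertex groups, and such a group is virtually free.
\end{enumerate}
The most delicate step is extracting the triangulation and the end structure from the grammar.
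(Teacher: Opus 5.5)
The paper gives no proof of this proposition. Both parts are quoted from Anisimov and from Muller--Schupp and used as black boxes; in particular, the implication from context-free word problem to virtually free is what is invoked in the proof of Theorem~\ref{prop:WordProblem}. Your proposal reconstructs the standard proofs from those sources, and it is correct as an outline.

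The Myhill--Nerode argument for (1) is a complete proof. So is the pushdown automaton for the easy half of (2), which keeps the coset representative in the state and the free-basis normal form of the $F$-part on the stack. The hard half of (2) follows the usual route: bounded triangulations from parse trees, hence finite presentability and more than one end, then Stallings, Dunwoody's accessibility, and a finite graph of finite groups. What you gain is self-containment for (1) and the easy half of (2). For the hard half you still rely on Stallings and Dunwoody, so the gain over the paper's citation is expository.

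Two steps should be tightened.

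\textbf{Step 5.} A factor $A=\langle Y\rangle$ of the splitting is generally not generated by a subset of $\Sigma$. So $\mathit{WP}(A,Y)$ is not literally the intersection of $\mathit{WP}(G,X)$ with a regular set. You need closure of $\mathbf{CF}$ under inverse morphisms: map each letter of $Y\cup Y^{-1}$ to a word over $\Sigma$ representing it. This is exactly the generating-set independence mentioned in the paper's Remark after Theorem~\ref{prop:WordProblem}.

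\textbf{Steps 3--4.} Two implications are asserted without argument: ``finitely many end types, so quasi-isometric to a tree'' and ``quasi-isometric to a tree and infinite, so more than one end''. A direct route to more than one end, closer to the original 1983 argument, is as follows.
\begin{itemize}
\item Take $u,v$ at distance $n$ from $1$, joined by a path avoiding the ball of radius $n-1$.
\item Triangulate the loop formed by this path and geodesics from $1$ to $u$ and to $v$. Some triangle has a vertex on each of the three arcs.
\item Every vertex of the path is at distance at least $n$ from $1$, so the vertices on the two geodesics are within $K$ of $u$ and $v$ respectively. This gives $d(u,v)\le 3K$.
\item Now take a bi-infinite geodesic $\gamma$ through $1$ and choose $n$ with $2n>3K$. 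The tails beyond $\gamma(n)$ and $\gamma(-n)$ must lie in distinct infinite components of the complement of that ball.
\end{itemize}
Hence $\Gamma$ has at least two ends.

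Finally, a grammar in Chomsky normal form cannot produce $\varepsilon$, so work with $\mathit{WP}(G,X)\setminus\{\varepsilon\}$; this is a trivial adjustment.
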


We show in this section that considering word problems in the class $\mathbf{VPL}$ cannot strengthen the above results. In particular, we prove the following.
\begin{theorem}\label{prop:WordProblem}
    Let $G=\langle X\rangle$ be a non-trivial, finitely generated, group and let $\pi: \Sigma^\star\to G$ be the standard projection map onto $G$. If $\mathit{WP}(G, X)\in\mathbf{VPL}$, then $\mathit{WP}(G,X)$ is regular and $G$ is finite. 
\end{theorem}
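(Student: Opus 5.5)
The plan is to show directly that $G$ is finite. Then $\mathit{WP}(G,X)$ is regular by Anisimov's theorem (part (1) of the proposition preceding the statement). Fix a partition $\Sigma=\Sigma_c\sqcup\Sigma_i\sqcup\Sigma_r$ and a deterministic VPA $V$ accepting $\mathit{WP}(G,X)$; determinism is available by Remark \ref{remark:VPLinclusions}.

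The basic tool is a pigeonhole argument in the style of Example \ref{ex:vpl}(ii). Suppose $L\subseteq\Sigma^\star$ is a set of words such that, after reading any word of $L$ from the initial configuration, $V$ is in one of only finitely many configurations. Then $\pi(L)$ is finite. Indeed, if $u_1,u_2\in L$ lead to the same configuration, then for every suffix $v$ we have $u_1v\in\mathit{WP}$ iff $u_2v\in\mathit{WP}$. Taking $v$ to be the formal inverse of $u_1$ gives $\pi(u_1)=\pi(u_2)$. I will apply this with $L=(\Sigma_r\cup\Sigma_i)^\star$. Internal letters do not touch the stack, and a response letter on an empty stack leaves $\bot$ in place, so every word of $L$ leads to a configuration with empty stack. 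Only $|Q|$ such configurations exist. Hence the submonoid $M\subseteq G$ generated by $\Sigma_r\cup\Sigma_i$ is finite, and a finite submonoid of a group is a subgroup. So $H=\langle \Sigma_r\cup\Sigma_i\rangle$ is a finite subgroup of $G$.

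It remains to handle the call letters. Take $c\in\Sigma_c$. If $c^{-1}\in\Sigma_r\cup\Sigma_i$, then $c\in H$, since $H$ is a group. If every call letter has this property, then $H=G$ and we are done. The remaining case is a call letter $c$ with $c^{-1}$ also a call. For this case I would use the congruence $\equiv$ of Proposition \ref{prop:CongsVPLs}, whose index is finite. The word $t=cc^{-1}$ is trivial and consists of two calls, so for any word $w$ the word $t^{|w|}w$ lies in $\mathit{MR}$ and represents $\pi(w)$. Given two elements $g,h$, I represent them by such words $u_1,u_2$. Let $\overline{u_2}$ denote the formal inverse of $u_2$, and set $v=t^{|u_2|}\overline{u_2}\in\mathit{MR}$. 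Then $u_2v\in\mathit{WP}$. So $u_1\equiv u_2$ forces $u_1v\in\mathit{WP}$, that is, $g=h$. Since $\equiv$ has finitely many classes, $G$ is finite.

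An alternative for the second case avoids Proposition \ref{prop:CongsVPLs}: prefix with $t^N$, which only pushes, and apply the same pigeonhole to configurations restricted to the top of the stack. The congruence route is cleaner, though.

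I expect the main obstacle to be this case analysis on how inverses sit in the partition, which is why I use the two different mechanisms. The point needing the most care is that each chosen suffix actually lies in the class the congruence quantifies over. Here that means checking that $t^{|u_2|}\overline{u_2}$ is matched-response, which holds because $t^{|u_2|}$ contributes $2|u_2|$ calls before any response is read. The nontriviality hypothesis plays no role beyond excluding the degenerate case.
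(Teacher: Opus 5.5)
Your proof is correct, and it takes a genuinely different and much more elementary route than the paper.

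\textbf{The paper's route.} It first shows that every generator is torsion, since otherwise $\{a,a^{-1}\}^\star\cap\mathit{WP}(G,X)$ would be the word problem of $\mathbb{Z}$. It then invokes Muller--Schupp to get that $G$ is virtually free. Next it uses Bass--Serre theory (Lemma \ref{lem:ellipticloxodromic}) to produce, when $G$ is infinite, generators $x,y$ with $xy$ of infinite order. Finally it rules out each placement of $x,y$ in the partition using the three congruences of Proposition \ref{prop:CongsVPLs}.

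\textbf{Your route.} You bypass Muller--Schupp and Bass--Serre theory entirely. The empty-stack pigeonhole shows that $\Sigma_r\cup\Sigma_i$ generates a finite subgroup $H$, and $H$ absorbs every call whose inverse is not a call. A call $c$ with $c^{-1}$ also a call gives $\pi(\mathit{MR})=G$, so the index of $\equiv$ bounds $|G|$. This is the same mechanism the paper uses later in Theorem \ref{thrm:symmetricpartitions}.

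\textbf{What your approach buys.}
\begin{itemize}
\item It is self-contained. You only need the direction ``VPL implies $\equiv$ has finite index'', which is immediate for a deterministic VPA: an $\mathit{MR}$ suffix never reads below its own pushes, so acceptance depends only on the state. This is also the precise form of the invariant your aside about ``the top of the stack'' is reaching for.
\item It uses one congruence instead of three and needs no separate torsion step.
\item It gives the explicit bound $|G|\le|Q|$ for any deterministic VPA with state set $Q$ accepting $\mathit{WP}(G,X)$.
\item It avoids the paper's case split on the partition of $\{x,y\}$. There, the case of one call and one internal letter is not listed explicitly; it is covered by the $\mathit{MR}$-suffix argument, since neither letter is a response.
\end{itemize}
The paper's route mainly offers a link to the tree geometry of virtually free groups, which the statement itself does not need.

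A minor simplification: you need not represent $g,h$ by words in $\mathit{MR}$. The relation $\equiv$ is defined on all of $\Sigma^\star$, and only the distinguishing suffix $t^{|u_2|}\overline{u_2}$ must lie in $\mathit{MR}$.
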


We will need the following lemma to show this result.

\begin{lemma}\label{lem:ellipticloxodromic}
    Let $G$ be virtually free, and assume $Y \subset G$ is a finite, inverse closed, generating set consisting of torsion\footnote{An element $g$ of a group is called \emph{torsion} if there is some $n\in\mathbb{N}_{n\geq 2}$ such that $g^n=1$.} elements. Then either $G$ is finite or there is some pair $x, y \in Y$ such that $xy$ is of infinite order in $G$.
\end{lemma}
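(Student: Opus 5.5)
The plan is to let $G$ act on a tree, so that torsion becomes ellipticity and the statement reduces to a fixed-point criterion of Serre.

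\textbf{Step 1: the tree.} Since $G$ is finitely generated and virtually free, it is the fundamental group of a finite graph of finite groups (Karrass--Pietrowski--Solitar, or via Stallings' theorem). So $G$ acts on the associated Bass--Serre tree $T$ without inversions, and every vertex stabiliser is finite. With this action, an element of $G$ has finite order exactly when it is elliptic, i.e.\ fixes a vertex of $T$:
\begin{itemize}
\item a finite-order element has a bounded orbit, and in an action without inversions this gives a fixed vertex;
\item an elliptic element lies in a finite vertex stabiliser, so it has finite order.
\end{itemize}
In particular every element of $Y$ is elliptic. It therefore suffices to show: if $xy$ is elliptic for all $x,y\in Y$, then $G$ is finite.

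\textbf{Step 2: pairwise common fixed points.} For $g$ elliptic, $\mathrm{Fix}(g)$ is a nonempty subtree of $T$. I will show that if $x,y$ are elliptic and $\mathrm{Fix}(x)\cap\mathrm{Fix}(y)=\emptyset$, then $xy$ is hyperbolic.
\begin{itemize}
\item Let $[p,q]$ be the bridge between the two fixed subtrees, with $p\in\mathrm{Fix}(x)$, $q\in\mathrm{Fix}(y)$, and $d=d(p,q)>0$.
\item Following where $xy$ sends a suitable geodesic through the bridge, one checks that $xy$ translates along an axis with translation length $2d$.
\end{itemize}
This is the standard argument in Serre's \emph{Trees}, I.6.5. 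Consequently, if every product $xy$ with $x,y\in Y$ is elliptic, the subtrees $\mathrm{Fix}(x)$, $x\in Y$, pairwise intersect.

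\textbf{Step 3: a global fixed point.} By the Helly property for finitely many pairwise intersecting subtrees of a tree, $\bigcap_{x\in Y}\mathrm{Fix}(x)$ contains a vertex $v$. Since $Y$ generates $G$, all of $G$ fixes $v$. Then $G=\mathrm{Stab}(v)$, which is finite. Contrapositively, if $G$ is infinite, some $xy$ is hyperbolic, and by Step 1 it has infinite order.

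\textbf{Main obstacle.} The delicate point is Step 2, the claim that disjoint fixed subtrees force $xy$ to be hyperbolic, together with the care needed about inversions. I would handle inversions by passing to the barycentric subdivision if necessary. For Step 2 itself I would cite Serre's Corollary 2 to Proposition I.26 rather than redo the geodesic bookkeeping. The Helly step is routine: for subtrees it follows by induction from the three-subtree case, using median points.
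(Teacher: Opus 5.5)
Your proposal is correct and follows essentially the same route as the paper. The shared ingredients are:
\begin{itemize}
\item torsion generators act elliptically on a tree with finite stabilisers;
\item the Helly property for the convex fixed subtrees reduces a global fixed point to pairwise intersections;
\item disjoint fixed subtrees force $xy$ to be hyperbolic, which the paper obtains from \cite[Lemma 7]{BT16} and you from Serre I.6.5.
\end{itemize}
The only differences are that you argue contrapositively and make explicit the finite-order/elliptic equivalence that the paper uses implicitly.
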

The most elegant proof of the lemma relies on the fact that virtually free groups act non-trivially on simplicial trees with finite vertex and edge stabilisers, according to Bass-Serre theory. See \cite{BT16} (among many references on Bass-Serre theory) for some background and the main lemma that we require.
\begin{proof}
Suppose that $G$ is infinite and virtually free. Then $G$ acts on some infinite tree $T$ via automorphisms. If an element $g\in G$ fixes (at least) a point in the tree then we say $g$ is elliptic and write $Fix(g)$ for the set of fixed points of $g$ (see \cite[Section 3]{BT16}); otherwise $g$ is hyperbolic and it acts as a translation, with no fixed points. (Automorphisms that invert an edge may exist, but we can easily modify the tree so that the action only has elliptic and hyperbolic elements.)

Since all elements in $Y$ are torsion, they are elliptic.

An infinite, virtually free, group $G$ does not have a global fixed point because all the stabilisers of the vertices (and edges) for the action of $G$ on $T$ are finite; to see this, note that having a global fixed point would imply the existence of a vertex stabiliser which is the whole group $G$, which is infinite, a contradiction. 
As $G$ does not fix any vertex, the
intersection $ \cap _{z\in Y}Fix(z)$ of all the fixed point sets is empty. Since $Fix(z)$ is convex for any $z \in G$, there exist two elements $x,y \in Y$ such that $Fix(x)\cap Fix(y)=\emptyset$ (see \cite[page 541]{BT16}). Then by \cite[Lemma 7]{BT16} the element $xy$ is hyperbolic, so of infinite order, which proves the lemma.
\end{proof}

\begin{proof}[Proof of Theorem \ref{prop:WordProblem}]

Since $\mathit{WP}(G, X)$ is in \vpl, there is a partition of the alphabet $\Sigma$ into call, response, and internal letters. Suppose some $a\in X$ has infinite order. Now consider the language $\{a,a^{-1}\}^\star\cap \mathit{WP}(G,X)$. This is precisely the word problem of $\mathbb{Z}$ over the generating set $\{a, a^{-1}\}$. Indeed, if it wasn't, then some non-trivial word $a^k, k \in\mathbb{Z}\setminus\{0\}$ would be in $\mathit{WP}(G,X)$, and hence $a$ would have finite order. Now, since \vpl s are closed under intersections, and $\{a,a^{-1}\}^\star$ is regular, if $\mathit{WP}(G,X)$ were \vpl ~then so would $\mathit{WP}(\mathbb{Z},\{a\})$ -- a contradiction (see Example \ref{ex:vpl}(iii)). 

Therefore, for every $a \in X$ there is some $n_a\in\mathbb{N}$ such that $\pi(a^{n_a}) = 1$. So  $X$ is a finite subset of torsion elements of $G$. Since $G$ has \vpl ~word problem, and hence context-free word problem, we know by Proposition \ref{Prop:MullerSchupp} that $G$ is virtually free. Thus by Lemma \ref{lem:ellipticloxodromic}, either $G$ is finite or there are some $x, y \in X$ such that $xy$ has infinite order. Note that this also means $yx$ has infinite order, since $xy$ and $yx$ are conjugate.

Let $m$ and $n$ be the orders of $x$ and $y$ respectively, since $x$ and $y$ are torsion. Then for each $k \in \mathbb{N}$ consider the words $$(xy)^k(y^{n-1}x^{m-1})^k\,,\,(yx)^k(x^{m-1}y^{n-1})^k\in \pi^{-1}(1)=\mathit{WP}(G,X).$$

Now, if one of $x, y$ is a call, and the other a response, then one of $(xy)^k$ and $(yx)^k$ is an infinite family of well-matched ($\mathit{WM}$) words that are pairwise distinguished by suffixes -- producing infinitely many equivalence classes with respect to the congruence $\approx$ and thus contradicting Proposition \ref{prop:CongsVPLs}. If neither $x$ nor $y$ are calls, then $(xy)^k$ is an infinite family of matched-call ($\mathit{MC}$) words that are pairwise distinguished by suffixes -- again a contradiction. If $x$ and $y$ are both calls, then $(y^{n-1}x^{m-1})^k$ is a family of matched-response ($\mathit{MR}$) suffixes, and $(xy)^k$ is an infinite family pairwise distinguished by \textit{MR} suffixes -- again a contradiction. Thus we cannot have the product of two generators having infinite order, so $G$ must be finite.
\end{proof}

We recall that the \emph{co-word problem} of $G$ is the complement of the word problem of $G$ in $\Sigma^\star$, so it is the set of all $w\in\Sigma^\star$ such that $\pi(w) \neq 1$. 
\begin{corollary}
    If $G$ has \vpl ~co-word problem, then $G$ is finite.
\end{corollary}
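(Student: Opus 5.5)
The plan is to reduce the corollary directly to Theorem \ref{prop:WordProblem} using the closure properties of \vpl. The co-word problem is $\Sigma^\star\setminus \mathit{WP}(G,X)$. Suppose it is accepted by a visibly pushdown automaton over some partition $\Sigma=\Sigma_c\sqcup\Sigma_i\sqcup\Sigma_r$. By Proposition \ref{prop:closure}, the class of \vpl s over this fixed partition is closed under complement. Hence
\[
\mathit{WP}(G,X)=\Sigma^\star\setminus\bigl(\Sigma^\star\setminus \mathit{WP}(G,X)\bigr)
\]
is also a \vpl\ over the same partition.

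Next I would invoke Theorem \ref{prop:WordProblem}. If $G$ is non-trivial, the theorem gives that $\mathit{WP}(G,X)$ is regular and $G$ is finite. If $G$ is trivial, it is finite anyway, so this degenerate case needs no separate argument. Either way $G$ is finite.

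The proof therefore has no real obstacle. The one point needing care is that complementation is taken with respect to the same partition of $\Sigma$ on which the co-word problem is visibly pushdown, and this is exactly what Proposition \ref{prop:closure} provides. The complement of a \vpl\ is again a \vpl\ over the same partition, since deterministic VPAs can be completed and their accepting states swapped.
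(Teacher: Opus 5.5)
Your proposal is correct and follows the paper's proof: closure of \vpl\ under complementation (over the same partition) turns a \vpl\ co-word problem into a \vpl\ word problem, and Theorem~\ref{prop:WordProblem} then gives finiteness. Your explicit treatment of the trivial group, which the theorem's hypothesis excludes, is a small point the paper leaves implicit.
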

\begin{proof}
The class    $\mathbf{VPL}$ is closed under complementation, and hence a group with \vpl ~co-word problem also has \vpl ~word problem.
\end{proof}

\begin{remark}
As is the case for regular and context-free languages, the formal language complexity of the word problem is generating set independent for the  \vpl ~ class. That is, if $\mathit{WP}(G,X)$ is \vpl ~for some generating set $X$ of $G$, it is \vpl ~for any finite generating set of $G$, so we could write `$\mathit{WP}(G)$ is \vpl'. For $\mathbf{REG}$ and $\mathbf{CF}$, this independence comes from the closure of these classes under inverse morphisms. Since $\mathbf{VPL}$ is \emph{not} closed under inverse morphisms, generating set independence is somewhat surprising. 
\end{remark}

\section{Types of \vpl ~sets in groups}

One of the most useful properties of regular languages in the context of free groups is the fact that they are closed under performing free reductions. That is, one may assume by Proposition \ref{prop:Benois&Herbst} that a rational set consists entirely of reduced words (over an inverse-closed generating set of a free group) without loss of generality. We consider here the interplay between \vpl s and free reduction. We will denote by $\mathbf{VPL}^{red}$ the class of languages of reduced words accepted by some visibly pushdown automaton.  A natural question then is whether this class coincides with $\mathbf{VPL}^\exists$ or $\mathbf{VPL}^\forall$, analogously to either Benois' or Herbst's Theorems (see Proposition \ref{prop:Benois&Herbst}). It turns out that neither case holds true.
\begin{proposition}\label{prop:VPLBenois}
    The following inclusions of language classes are strict: $$\mathbf{VPL}^\forall\subsetneq \mathbf{VPL}^{red}\subsetneq \mathbf{VPL}^\exists.$$
\end{proposition}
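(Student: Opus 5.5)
We read $\mathbf{VPL}^{red}$ as the class of subsets $K\subseteq F(X)$ for which $r(K)$ is \vpl, so all three classes are classes of subsets of $F(X)$. The plan is to prove the two inclusions directly and then give one separating example for each.

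\textbf{The inclusions.} Suppose $K\in\mathbf{VPL}^\forall$. Then $r(K)=\pi^{-1}(K)\cap \mathrm{Red}$, where $\mathrm{Red}$ is the regular language of reduced words. Since regular languages are \vpl\ over every partition (Remark \ref{remark:VPLinclusions}), Proposition \ref{prop:closure} gives $r(K)\in\mathbf{VPL}$. Hence $K\in\mathbf{VPL}^{red}$. For the second inclusion, if $r(K)$ is \vpl\ then $L=r(K)$ satisfies $\pi(L)=K$, so $K\in\mathbf{VPL}^\exists$.

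\textbf{First strictness.} Take $K=\{1\}$ in $F(X)$ with $X\neq\emptyset$. Then $r(K)=\{\varepsilon\}$ is regular, so $K\in\mathbf{VPL}^{red}$. On the other hand, $\pi^{-1}(K)=\mathit{WP}(F(X),X)$. By Theorem \ref{prop:WordProblem} this is not \vpl, because $F(X)$ is infinite. So $K\notin\mathbf{VPL}^\forall$.

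\textbf{Second strictness.} Work in $F(a,b)$. Use the partition with $\Sigma_c=\{a,b^{-1}\}$ and $\Sigma_r=\{b,a^{-1}\}$, and let
$$L=\{a^n b^n\}\cdot\{b^{-m}a^{-m}\}=\{a^nb^nb^{-m}a^{-m}: n,m\ge 0\}.$$
Each factor is \vpl\ over this partition, as in Example \ref{ex:vpl}(i). By Proposition \ref{prop:closure} their concatenation $L$ is \vpl, so $K=\pi(L)\in\mathbf{VPL}^\exists$. Free reduction cancels the middle block, so
$$r(K)=\{a^nb^{\,n-m}a^{-m}: n,m\ge 0\},$$
where $b^{j}$ means $(b^{-1})^{|j|}$ when $j<0$. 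It remains to show that $r(K)$ is not \vpl\ over \emph{any} partition of $\{a^{\pm1},b^{\pm1}\}$. Intersecting with the regular set $a^\star b^\star (a^{-1})^\star$ and using Proposition \ref{prop:closure}, it suffices to show that $M=\{a^nb^{k}a^{-m}: k=n-m\ge0\}$ is not \vpl\ over any partition.

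\textbf{The main obstacle} is a case analysis on the types of $a$, $b$ and $a^{-1}$. In every case the aim is to exhibit an infinite family that is pairwise inequivalent for one of $\equiv,\sim_0,\approx$, contradicting Proposition \ref{prop:CongsVPLs}, in the same style as the proof of Theorem \ref{prop:WordProblem}.

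If $a$ is not a call, the prefixes $a^n$ are in $\mathit{MC}$. They are pairwise distinguished by the suffixes $b^n$ (with $m=0$), so $\sim_0$ has infinite index.

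If $a$ is a call and $b$ is not a response, the prefixes $a^n$ are distinguished by suffixes $b^{n-m}a^{-m}$. One then has to check in each subcase that suitable such suffixes lie in $\mathit{MR}$, or else find an equivalent $\mathit{MC}$ family.

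If $a$ is a call and $b$ is a response, the words $a^nb^n$ are well matched but carry no information. Instead, consider the words $a^{n}b^{n-m}$ for fixed $n$ and varying $m$. After reading such a word the stack contains $m$ unmatched $a$'s, and these must be compared against $a^{-m}$. The equation $k=n-m$, however, also ties $m$ to the matched part $a^{n-m}b^{n-m}$, and a VPA discards that part once it has been popped.

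To make this last case precise, use the prefixes $a^{2j}b^{j}$. They are pairwise inequivalent under $\equiv$, since $a^{2j}b^{j}$ is completed only by the suffix $a^{-j}$, and that suffix lies in $\mathit{MR}$ when $a^{-1}$ is internal or a call. When $a^{-1}$ is a response, the argument has to use $\approx$ on well-matched blocks, with a pumping argument on the stack height. This is the step where the details are hardest to pin down.
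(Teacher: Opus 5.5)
Your two inclusions and the first separation are fine. Using Theorem~\ref{prop:WordProblem} for $\{1\}\subseteq F(X)$ in place of the paper's direct appeal to $\mathit{WP}(\mathbb{Z})$ is legitimate.

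The second separation, however, fails. The subcase you leave open ($a$ a call, $b$ and $a^{-1}$ responses) is not missing details; there your claim is false. Over your own partition $\Sigma_c=\{a,b^{-1}\}$, $\Sigma_r=\{b,a^{-1}\}$ one has
\[
r(K)=\{\varepsilon\}\cup\{a^nb^{k}a^{-m}: k=n-m>0\}\cup\{a^n(b^{-1})^{k}a^{-m}: k=m-n>0\}.
\]
Note that for $n=m$ the word reduces to $\varepsilon$, not to $a^na^{-n}$ as your formula says. Let $R=a^\star b^+(a^{-1})^\star\cup a^\star(b^{-1})^+(a^{-1})^\star$. A word of $R$ lies in $r(K)$ exactly when it is well-matched. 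The first family consists of $n$ calls followed by $k+m=n$ responses, and the second of $n+k=m$ calls followed by $m$ responses. So $r(K)=(\mathit{WM}\cap R)\cup\{\varepsilon\}$. In $R$ all calls precede all responses, so $\mathit{WM}\cap R$ is recognised by a VPA that does the following. It checks the regular pattern and pushes a marker on the first call. It accepts iff the last letter popped that marker, and rejects if a response ever meets $\bot$. Hence $K\in\mathbf{VPL}^{red}$, and your $K$ does not separate the classes.

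The same goes for your $M$. With $a$ a call and $b,a^{-1}$ responses, the condition $k=n-m$ is just the single balance condition $n=k+m$, which the stack checks by itself. The VPA never needs to remember the popped part $a^{n-m}b^{n-m}$, so no congruence or pumping argument can work there. (Also, $a^{2j}b^j$ is completed not only by $a^{-j}$ but by every $b^{i}a^{-(j-i)}$ with $0\le i\le j$; this is harmless for the $\equiv$ subcase.)

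The root cause is that reduction in your $L$ only cancels factors $bb^{-1}$, a response against a call. This leaves the call/response balance intact, and the reduced words stay well-matched. For a separating example you want reduction to cancel letters whose types do not offset each other.

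The paper takes $L=\{(aaa^{-1})^nb^{2n}\}$ with $\Sigma_c=\{a\}$, $\Sigma_r=\{b\}$, $\Sigma_i=\{a^{-1},b^{-1}\}$. Each block $aaa^{-1}$ pushes twice but reduces to a single $a$, so $r(L)=\{a^nb^{2n}\}$. Showing this is not $\mathbf{VPL}$ over any partition is a short two-case argument. For $i\neq j$, the words $a^ib^i$ and $a^jb^j$ are separated by the suffix $b^j$. If $b$ is not a response, these suffixes lie in $\mathit{MR}$, so $\equiv$ has infinite index. If $b$ is a response, then $a^nb^n\in\mathit{MC}$, so $\sim_0$ has infinite index. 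Either way this contradicts Proposition~\ref{prop:CongsVPLs}.
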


\begin{proof}
    We show this via two examples, one for each of the strict inclusions.
    \begin{enumerate}
        \item  Consider the group $\mathbb{Z}$ with generating set $\{a, a^{-1}\}$. The set $\{0\}\subset\mathbb{Z}$ is a rational set (represented by $\varepsilon$), hence a \vpl$^{red}$ set by Benois' Theorem, but it is not $\mathbf{VPL}^\forall$, since the preimage of $\{0\}$ is  $\mathit{WP}(\mathbb{Z},\{a, a^{-1}\}) $. 
        \item  Now consider the free group $F_2$ of rank $2$ over $\Sigma = \{a, b, a^{-1}, b^{-1}\}$ and let $L := \{(aaa^{-1})^nb^{2n}\, |\, n\in\mathbb{N}\}\subset \Sigma^\star.$ With the partition $\Sigma_c = \{a\}, \, \Sigma_i = \{a^{-1},b^{-1}\},\ \Sigma_r = \{b\}$, the language $L$ is the accept language of the VPA in Figure \ref{fig:VPAcounterexample}. Thus $\pi(L) \subset F_2$ is a\ $\mathbf{VPL}^\exists$ set. 
        
        However, the set of reduced words $r(L) = \{a^nb^{2n} \mid n\in\mathbb{N}\}$ is not \vpl\  ~for any partition of the alphabet. 
Indeed, consider the infinite family $\{a^nb^n \mid n\in\mathbb{N}\}$. Each pair of words $a^ib^i$ and $a^jb^j$ for $i\neq j$ in this family can be pairwise distinguished from one another by the suffix $b^j$. If $b$ is not a response, then $b^j\in \mathit{MR}$ for every $j$. Thus the congruence $\equiv$ is infinite index for $r(L)$. On the other hand, if $b$ is a response, then $a^nb^n\in \mathit{MC}$ for each $n$, and hence the congruence $\sim_0$ is infinite index. Either way, by Proposition \ref{prop:CongsVPLs}, $r(L)$ cannot be \vpl.
    \end{enumerate}\end{proof}

\begin{figure}

\begin{center}
\begin{tikzpicture}[scale=0.12]
\tikzstyle{every node}+=[inner sep=0pt]
\draw [black] (10.1,-12.6) circle (3);
\draw [black] (10.1,-12.6) circle (2.4);
\draw [black] (25.4,-12.6) circle (3);
\draw [black] (33.6,-26.6) circle (3);
\draw [black] (41.7,-12.6) circle (3);
\draw [black] (56.6,-12.6) circle (3);
\draw [black] (71.7,-12.6) circle (3);
\draw [black] (71.7,-12.6) circle (2.4);
\draw [black] (4.2,-12.6) -- (7.1,-12.6);
\fill [black] (7.1,-12.6) -- (6.3,-12.1) -- (6.3,-13.1);
\draw [black] (26.92,-15.19) -- (32.08,-24.01);
\fill [black] (32.08,-24.01) -- (32.11,-23.07) -- (31.25,-23.57);
\draw (28.85,-20.84) node [left] {$a,\$$};
\draw [black] (44.7,-12.6) -- (53.6,-12.6);
\fill [black] (53.6,-12.6) -- (52.8,-12.1) -- (52.8,-13.1);
\draw (49.15,-13.1) node [below] {$b,\$$};
\draw [black] (57.923,-15.28) arc (54:-234:2.25);
\draw (56.6,-19.85) node [below] {$b,\$$};
\fill [black] (55.28,-15.28) -- (54.4,-15.63) -- (55.21,-16.22);
\draw [black] (59.6,-12.6) -- (68.7,-12.6);
\fill [black] (68.7,-12.6) -- (67.9,-12.1) -- (67.9,-13.1);
\draw (64.15,-13.1) node [below] {$b,\#$};
\draw [black] (13.1,-12.6) -- (22.4,-12.6);
\fill [black] (22.4,-12.6) -- (21.6,-12.1) -- (21.6,-13.1);
\draw (17.75,-13.1) node [below] {$a,\#$};
\draw [black] (35.1,-24) -- (40.2,-15.2);
\fill [black] (40.2,-15.2) -- (39.36,-15.64) -- (40.23,-16.14);
\draw (38.3,-20.83) node [right] {$a^{-1}$};
\draw [black] (38.7,-12.6) -- (28.4,-12.6);
\fill [black] (28.4,-12.6) -- (29.2,-13.1) -- (29.2,-12.1);
\draw (33.55,-12.1) node [above] {$a,\$$};
\end{tikzpicture}
\end{center}

\caption{A visibly pushdown automaton accepting $\{(aaa^{-1})^nb^{2n}\, |\, n\in\mathbb{N}\}$. Here, $a$ is a call, and pushes the symbol after the comma onto the stack. Meanwhile $b$ is a response, and pops the symbol after the comma from the stack.} \label{fig:VPAcounterexample}
\end{figure}
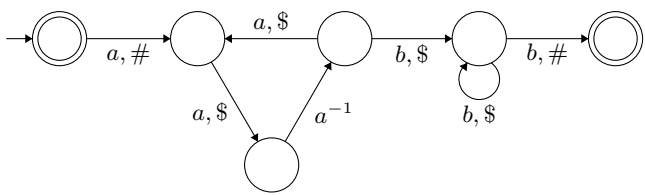

\subsection{$\mathbf{VPL}^\forall$ sets in groups}\label{sec:vplforall}

We now explore the structure of the class $\mathbf{VPL}^\forall$ over arbitrary groups. We conjecture, and make significant progress towards proving, the following.

\begin{conj}\label{conjecture1}
    In all groups, every $\mathbf{VPL}^\forall$ set is recognisable. In other words, the classes $\mathbf{VPL}^\forall$ and $\mathbf{REG}^\forall$ are equal. 
\end{conj}

We will call a partition of $\Sigma$ into calls, responses, and internals \emph{symmetric} if every call has infinite order, and  for every call $x$, $x^{-1}$ is a response, and for every response $y$, $y^{-1}$ is a call.

\begin{theorem}\label{thrm:symmetricpartitions}
    Let $G$ be a group generated by $X$. Suppose $\Sigma = \Sigma_c\sqcup\Sigma_i\sqcup\Sigma_r$ is \emph{not} a symmetric partition of $\Sigma$. Then any $\mathbf{VPL}^\forall$ subset of $G$ over this partition is a recognisable subset of $G$. 
\end{theorem}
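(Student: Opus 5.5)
The plan is to show directly that $L=\pi^{-1}(S)$ is regular. Then $S$ is $\mathbf{REG}^\forall$, i.e.\ recognisable by definition, and Proposition \ref{prop:recsetsareficosets} gives its algebraic shape. The basic observation is that, since $L$ is a full preimage, membership depends only on the image in $G$: for all words $u,v$ we have $uv\in L \iff \pi(u)\pi(v)\in S$. In particular, inserting a word representing $1$ anywhere in a word does not change membership in $L$. Consequently, the Myhill--Nerode class of $u$ depends only on $g=\pi(u)$, through the translate $g^{-1}S$. So regularity of $L$ is equivalent to $\{g^{-1}S : g\in G\}$ being finite. I will bound this number using the finiteness of the visibly pushdown congruences $\sim_0$ or $\equiv$ from Proposition \ref{prop:CongsVPLs}.

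The key step is to show that, for a non-symmetric partition, either every element of $G$ has a representative in $\mathit{MR}$, or every element of $G$ has a representative in $\mathit{MC}$. The case split follows the failure of symmetry.

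(a) Some call $x$ has finite order $n$. The word $x^{nk}$ represents $1$ and consists of $nk$ calls. Prepending $x^{nk}$ to any word $w$ with $nk\ge |w|$ gives a word in $\mathit{MR}$ with the same image.

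(b) Some call $x$ has $x^{-1}$ not a response. Then $(xx^{-1})^k$ represents $1$, contains at least $k$ calls and no responses. Prepending it for $k\ge|w|$ again gives an $\mathit{MR}$ representative.

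(c) Some response $y$ has $y^{-1}$ not a call. Then $(y^{-1}y)^k$ represents $1$, contains at least $k$ responses and no calls. Appending it to $w$ for $k\ge|w|$ gives a representative in $\mathit{MC}$, since every suffix now has at least as many responses as calls.

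Conversely, if none of (a)--(c) holds, then every call has infinite order and the inverse map swaps calls and responses, so the partition is symmetric. The degenerate partitions with no calls at all deserve a brief check. If $\Sigma_c=\emptyset$ and $\Sigma_r\neq\emptyset$, then case (c) applies, because the inverse of a response cannot be a call. If $\Sigma_c=\Sigma_r=\emptyset$, then the automaton is finite-state and $L$ is regular outright. (With no calls and no responses the partition is vacuously symmetric anyway, so this case only arises if one's convention excludes it.)

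Finally, I convert this into finite index. In the $\mathit{MC}$ case, every $g\in G$ equals $\pi(u)$ for some $u\in\mathit{MC}$, and $u_1\sim_0 u_2$ means exactly that $\pi(u_1)^{-1}S=\pi(u_2)^{-1}S$. Hence the number of distinct translates is at most the finite index of $\sim_0$. In the $\mathit{MR}$ case, suppose $u_1\equiv u_2$, so $u_1,u_2$ agree on all suffixes in $\mathit{MR}$. An arbitrary suffix $v$ can be replaced by an $\mathit{MR}$ word $v'$ with $\pi(v')=\pi(v)$ without changing membership of $u_1v$ or $u_2v$. Hence $\equiv$-equivalent words have equal translates, and the finite index of $\equiv$ bounds their number.

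The main point needing care is case (b) when $x^{-1}$ is internal, and more generally the precise prefix and suffix conditions in Definition \ref{defn:MCMRWM}. For $\mathit{MR}$ I need every prefix of $(xx^{-1})^kw$ to have at most as many responses as calls. This holds because the padding contributes calls and no responses, and $k\ge|w|$ covers any responses in $w$. The symmetric statement for $\mathit{MC}$ in case (c) works the same way with suffixes.
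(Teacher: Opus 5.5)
Your proposal is correct and follows essentially the same route as the paper: you pad with words representing the identity to show $\pi(\mathit{MR})=G$ or $\pi(\mathit{MC})=G$, and then bound the Myhill--Nerode index of $\pi^{-1}(S)$ by the finite index of $\equiv$ or $\sim_0$. The differences are only cosmetic: you pad with $(xx^{-1})^k$ and $(y^{-1}y)^k$ instead of $x^nx^{-n}$ and $y^ny^{-n}$, and you fold torsion responses into your cases (a) or (c) rather than treating them as a separate case.
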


\begin{proof}
The goal is to use the structure of the partition and the congruences outlined in Section \ref{section:congs} to show that the Myhill-Nerode congruence must have finite-index.

In order for our partition to not be symmetric, either there must exist some call $x$ such that $x^{-1}$ is not a response, or some response $y$ such that $y^{-1}$ is not a call, or some torsion element $z$ which is either a call or a response. 

Recalling the sets $\mathit{MR}$ and $\mathit{MC}$ from Definition \ref{defn:MCMRWM}, in the first of these cases, $\pi(\mathit{MR}) = G$. Indeed, for every $w \in \Sigma^\star$, consider the word $x^nx^{-n}w$, where $n = |w|$. If $w$ has any unmatched responses, they will match to a call letter from the prefix $x^nx^{-n}$, which has no response letters in it. So $x^nx^{-n}w \in \mathit{MR}$ and $\pi(x^nx^{-n}w) = \pi(w).$

Dually, in the second of the above cases, $\pi(\mathit{MC}) = G$. Indeed, for every $w \in \Sigma^\star$, consider the word $wy^ny^{-n}$, where $n = |w|$. If $w$ has any unmatched calls, they will match to a response letter in the suffix $y^ny^{-n}$, which has no call letters in it. Hence $wy^ny^{-n}\in \mathit{MC}$ and $\pi(wy^ny^{-n})= \pi(w)$.

Finally, in the third case of a call (resp. response) torsion element $z$, supposing $z^k = 1$, for each $w \in \Sigma^\star$ we can consider the word $z^{kn}w$ (resp. $wz^{kn}$), where $n = |w|$, to show that either $\pi(\mathit{MR}) = G$ or $\pi(\mathit{MC}) = G$, respectively. 

Now, suppose $S\subseteq G$ is a $\mathbf{VPL}^\forall$ set, i.e. $L:= \pi^{-1}(S) \in\mathbf{VPL}$. Consider the Myhill-Nerode congruence $\sim_L$ of the preimage language. Since $L$ is a full preimage, $u_1\sim_L u_2$ is equivalent to the statement $\forall v\in\Sigma^\star: \pi(u_1v) \in S \iff \pi(u_2v) \in S$. Notice here that, since $\pi$ is a homomorphism, if for a given $v_1$, $\pi(u_1v_1)\in S \iff \pi(u_2v_1)\in S$ holds, and some $v_2$ is such that $\pi(v_2) = \pi(v_1)$, then  $\pi(u_1v_2)\in S \iff \pi(u_2v_2)\in S$ also holds.

We therefore see that $u_1\sim_Lu_2$ is equivalent to the statement $$\forall v \in W: \pi(u_1v)\in S \iff \pi(u_2v) \in S,$$ where $W\subseteq \Sigma^\star$ is a language such that $\pi(W) = G$.  

We now treat the two cases arising from the partition not being symmetric.
\begin{itemize}
    \item if $\pi(\mathit{MR}) = G$, then taking $W = \mathit{MR}$ above, we see that $u_1\sim_L u_2$ if and only if $u_1 \equiv u_2$ (recall $\equiv$ from Section \ref{section:congs}). Since $L\in\mathbf{VPL}$, $\equiv$ is finite-index by Proposition \ref{prop:CongsVPLs}, and hence so is $\sim_L$. Thus $L$ is regular and $S$ is recognisable.
    \item if $\pi(\mathit{MC}) = G$, we first note that if $u, v$ are such that $\pi(u) = \pi(v)$, then $u\sim_Lv$. If $\pi(u)\neq \pi(v)$, then we consider $\tilde{u}, \tilde{v}\in\mathit{MC}$ such that $\pi(\tilde{u})=\pi(u)$ and $\pi(\tilde{v}) = \pi(v)$. We see that $\tilde{u}\sim_L\tilde{v}$ is equivalent to $\tilde{u}\sim_0\tilde{v}$. By transitivity, $u\sim_L v$ if and only if $\tilde{u}\sim_0\tilde{v}$. Again by Proposition \ref{prop:CongsVPLs}, $\sim_0$ is finite-index, and hence so is $\sim_L$. Thus $S$ is recognisable.
\end{itemize}
\end{proof}

\begin{proposition}
    Let $G$ be a group, and suppose that $S\subseteq G$ is a $\mathbf{VPL}^\forall$ set. Suppose further that $\pi^{-1}(S)$ is a well-matched VPL, i.e. $\pi^{-1}(S)\subseteq \mathit{WM}$. Then $S$ is recognisable.
\end{proposition}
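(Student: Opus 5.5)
The plan is to show that the hypothesis $\pi^{-1}(S)\subseteq \mathit{WM}$ forces the partition to have no calls and no responses, unless $S$ is empty. In that degenerate situation the visibly pushdown automaton is simply a finite-state automaton.

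First I dispose of the trivial case. If $S=\emptyset$ then $\pi^{-1}(S)=\emptyset$ is regular, so $S$ is recognisable. So assume $S\neq\emptyset$, set $L:=\pi^{-1}(S)$, and fix a word $w\in L$.

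The key observation is that $L$ is a \emph{full} preimage, so it is closed under appending any word representing the identity. For every $u\in \mathit{WP}(G,X)$ we have $\pi(wu)=\pi(w)\in S$, hence $wu\in L\subseteq \mathit{WM}$. I then exploit this with two specific trivial words. Recall that $x$ and $x^{-1}$ are distinct formal symbols of $\Sigma$.
\begin{itemize}
\item Suppose some letter $c\in\Sigma_c$. Take $u=c^{-1}c$, which is trivial in $G$. The one-letter suffix $c$ of $wc^{-1}c$ contains one call and no responses. This violates the defining condition of $\mathit{MC}$ in Definition \ref{defn:MCMRWM}, so $wc^{-1}c\notin\mathit{WM}$, a contradiction.
\item Suppose some letter $r\in\Sigma_r$. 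Take the word $rr^{-1}w$, which lies in $L$ for the same reason. Its one-letter prefix $r$ contains one response and no calls. This violates the condition for $\mathit{MR}$, again a contradiction.
\end{itemize}
Hence $\Sigma_c=\Sigma_r=\emptyset$ and $\Sigma=\Sigma_i$.

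With every letter internal, no transition of the visibly pushdown automaton accepting $L$ can depend on or modify the stack, and there are no $\varepsilon$-transitions. Discarding the stack therefore yields a finite-state automaton accepting the same language, so $L$ is regular. Equivalently, in this case $\mathit{MR}=\mathit{MC}=\Sigma^\star$, so $\equiv$ coincides with the Myhill--Nerode congruence, which is finite index by Proposition \ref{prop:CongsVPLs}. Either way $L=\pi^{-1}(S)$ is regular, so $S$ is recognisable (and by Proposition \ref{prop:recsetsareficosets} it is a finite union of cosets of a finite index normal subgroup).

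I do not expect a serious obstacle here. The only point requiring care is choosing the inserted trivial words so that a single boundary letter witnesses the failure of $\mathit{MC}$ (respectively $\mathit{MR}$), independently of how $w$ itself is matched. Placing the offending call at the very end, or the offending response at the very beginning, achieves exactly that.
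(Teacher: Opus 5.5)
Your proof is correct and is essentially the paper's argument: appending $c^{-1}c$ to a word of $\pi^{-1}(S)$ (respectively prepending $rr^{-1}$) gives a word of the full preimage that is not well-matched, which forces the all-internal partition and hence regularity. Your separate treatment of $S=\emptyset$, where there is no word $w$ to start from, is a small improvement on the paper, which leaves that case implicit.
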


\begin{proof}
    Suppose $\Sigma_c$ is non-empty, and suppose $a$ is a call. Then for $w \in \pi^{-1}(S)$, $wa^{-1}a \in \pi^{-1}(S)$, but this word is not well-matched since the final letter is a call. Likewise, suppose $\Sigma_r$ is non-empty, and suppose $b$ is a response. Then for $w\in\pi^{-1}(S)$, $bb^{-1}w \in \pi^{-1}(S)$, but this word is not well-matched since the first letter is a response.\footnote{Here, $a^{-1}$ and $b^{-1}$ can refer to any choice of word corresponding to the inverse of $a$ and $b$. Thus, the Proposition still holds for generating alphabets that are not inverse closed and hence may not have a letter corresponding to the inverses of $a$ resp. $b$.} Thus the only possible partition of $\Sigma$ is $\Sigma_c = \Sigma_r = \emptyset,\,\Sigma_i = \Sigma$, and hence $\pi^{-1}(S)$ will be the accept language of a finite-state automaton. 
\end{proof}

\begin{proposition}
    Let $S\subseteq G$ be a finite set. Then $S$ is $\mathbf{VPL}^\forall$ if and only if $S$ is recognisable, if and only if $G$ is finite. 
\end{proposition}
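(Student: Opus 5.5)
Let $S\subseteq G$ be finite and put $L:=\pi^{-1}(S)$. I will prove the equivalences in the order: $G$ finite $\Rightarrow$ $S$ recognisable $\Rightarrow$ $S$ is $\mathbf{VPL}^\forall$ $\Rightarrow$ $G$ finite.

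\emph{First two implications.} If $G$ is finite, the trivial subgroup $\{1\}$ is a finite index normal subgroup. Hence $S=\bigcup_{s\in S}\{1\}s$ is a finite union of its cosets. Alternatively, $L$ is accepted by the finite automaton whose states are the elements of $G$, with accepting set $S$. Either way $S$ is recognisable. Every regular language is $\mathbf{VPL}$ (Remark \ref{remark:VPLinclusions}), so recognisable sets are $\mathbf{VPL}^\forall$.

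\emph{Main implication: $S$ finite and $\mathbf{VPL}^\forall$ forces $G$ finite.} If $S=\emptyset$, then $L=\emptyset$ is regular for every $G$. So the statement must either be read with $S\neq\emptyset$, or we note that $\emptyset$ is trivially recognisable. I will assume $S$ is nonempty. The plan is to recover the word problem from $L$ using closure properties over the fixed partition, and then apply Theorem \ref{prop:WordProblem}.

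Fix $s\in S$ and a word $w_s$ with $\pi(w_s)=s$. Consider the left quotient $w_s^{-1}L=\{u:\ w_su\in L\}$. This is $\mathbf{VPL}$ by Proposition \ref{prop:quotients}, since $\{w_s\}$ is finite, hence regular, hence $\mathbf{VPL}$ over any partition. Because $L$ is a full preimage, $w_s^{-1}L=\pi^{-1}(s^{-1}S)$, and $s^{-1}S$ is a finite set containing $1$.

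To isolate $\{1\}$, I use right quotients. For each $t\in T:=s^{-1}S\setminus\{1\}$, pick a word $v_t$ with $\pi(v_t)=t$. Then $\pi^{-1}(s^{-1}S)\,v_t^{-1}=\pi^{-1}(s^{-1}St^{-1})$, which is again $\mathbf{VPL}$ and contains $\pi^{-1}(1)$. Intersecting $\pi^{-1}(s^{-1}S)$ with $\pi^{-1}(s^{-1}St^{-1})$ for suitable $t$ shrinks the set. More cleanly, $\bigcap_{g\in s^{-1}S} \pi^{-1}(s^{-1}S)\,v_g^{-1}=\pi^{-1}\bigl(\bigcap_{g} s^{-1}Sg^{-1}\bigr)$, and the set $H:=\bigcap_{g\in F}Fg^{-1}$ for $F=s^{-1}S$ is the set of $h$ with $hF\subseteq F$. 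Since $F$ is finite, $hF=F$, so $H$ is a finite subgroup of $G$. All these operations use a common partition, so Proposition \ref{prop:closure} applies and $\pi^{-1}(H)$ is $\mathbf{VPL}$.

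\emph{Main obstacle.} $H$ need not be trivial, so Theorem \ref{prop:WordProblem} does not apply verbatim. To handle this, I will rerun the proof of Theorem \ref{prop:WordProblem} with $\pi^{-1}(H)$ in place of $\mathit{WP}(G,X)$. If a generator $a$ has infinite order, then $\{a,a^{-1}\}^\star\cap\pi^{-1}(H)=\{w: a^{\sigma(w)}\in H\}$, where $\sigma(w)$ is the exponent sum of $w$. Because $\langle a\rangle\cap H$ is finite in an infinite cyclic group, it is trivial, so this language is $\mathit{WP}(\mathbb{Z})$, contradicting Example \ref{ex:vpl}(iii).

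Hence all generators are torsion. The set $\pi^{-1}(H)$ is $\mathbf{CF}$, and $\pi^{-1}(H)$ context-free for finite $H$ gives that $G$ is virtually free. Here I would argue that the coset automaton trick makes $\mathit{WP}(G,X)$ context-free: intersect with a finite-state transducer tracking the $H$-component, or cite the standard fact that a finite subgroup does not affect context-freeness. Then Lemma \ref{lem:ellipticloxodromic} applies. If $G$ is infinite, it gives $x,y$ with $xy$ of infinite order. Since only finitely many powers of $xy$ lie in $H$, the same congruence argument with the words $(xy)^k(y^{n-1}x^{m-1})^k$, restricted to $k$ in a suitable residue class avoiding $H$-coincidences, produces infinitely many $\equiv$, $\sim_0$ or $\approx$ classes, contradicting Proposition \ref{prop:CongsVPLs}. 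Thus $G$ is finite.
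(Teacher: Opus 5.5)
\textbf{The gap.} The step ``$\pi^{-1}(H)$ is context-free, hence $G$ is virtually free'' is not justified. Muller--Schupp is a statement about $\pi^{-1}(1)$ only, and neither mechanism you sketch gets you there:
\begin{itemize}
\item No finite-state device can ``track the $H$-component''. $H$ is a finite, possibly non-normal subgroup of infinite index, so when $G$ is infinite the image of a word can lie in any of infinitely many cosets $Hg$.
\item You cannot cut $H$ down to $\{1\}$ inside $\mathbf{CF}$, because $\mathbf{CF}$ is not closed under intersection.
\end{itemize}
Getting virtual freeness from $\pi^{-1}(H)\in\mathbf{CF}$ with $H$ finite would need real machinery: Schreier graphs as context-free graphs, passing tree-likeness to the $|H|$-sheeted cover $\mathrm{Cay}(G,X)\to H\backslash\mathrm{Cay}(G,X)$, and quasi-isometric rigidity of free groups. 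Nor can the step be skipped. Lemma~\ref{lem:ellipticloxodromic} is your only source of generators $x,y$ with $xy$ of infinite order, and an infinite group generated by finitely many torsion elements (e.g.\ an infinite torsion group) need not contain any element of infinite order.

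\textbf{A repair inside $\mathbf{VPL}$.} Stay in $\mathbf{VPL}$, where intersection over a common partition is available.
\begin{itemize}
\item For $g\in G$ and words $p,q$ representing $g^{-1},g$, we have $\pi^{-1}(gHg^{-1})=\{u: puq\in\pi^{-1}(H)\}$. This is a two-sided single-word quotient of $\pi^{-1}(H)$, hence $\mathbf{VPL}$ over the same partition by Proposition~\ref{prop:quotients}.
\item Since $H$ is finite, its normal core $N=\bigcap_{g\in G}gHg^{-1}$ already equals a finite intersection $\bigcap_{i=1}^r g_iHg_i^{-1}$ (take one of minimal size). So $\pi^{-1}(N)$ is $\mathbf{VPL}$.
\item $\pi^{-1}(N)$ is exactly the word problem of $G/N$ over the images of $X$. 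Theorem~\ref{prop:WordProblem} therefore makes $G/N$ finite (if non-trivial), and $G$ is finite because $N$ is.
\end{itemize}
This replaces your rerun of the theorem's proof entirely. The residue-class device there was unnecessary anyway, since $\langle xy\rangle\cap H=1$.

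\textbf{The paper's route.} The paper never passes through the word problem. For infinite $G$ the set $S$ is not recognisable, so by Theorem~\ref{thrm:symmetricpartitions} the partition is symmetric. This gives a call $x$ of infinite order with $x^{-1}$ a response. Finite index of $\sim_0$ on the $\mathit{MC}$ words $x^{-k}$ then yields an infinite $I$ and $i_0\in I$ with $x^{-i}x^{i_0}w\in\pi^{-1}(S)$ for all $i\in I$, where $\pi(w)\in S$. These words represent infinitely many distinct elements of $S$, a contradiction.

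Your caveat that $S=\emptyset$ must be excluded is correct; the paper's proof tacitly assumes $S\neq\emptyset$ too.
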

\begin{proof}
    If $G$ is finite there is nothing to prove. If $G$ is infinite, $S$ cannot be recognisable by Proposition \ref{prop:recsetsareficosets}. Thus the partition of $\Sigma$ over which $\pi^{-1}(S)$ is \vpl ~must have nonempty call and response alphabets, and by Theorem \ref{thrm:symmetricpartitions}, the partition must be symmetric.

    Now, take an element $g\in S$ with $w\in\Sigma^\star$ a reduced representative of $g$. Suppose $x \in \Sigma$ of infinite order is a call, with $x^{-1}$ a response. By Proposition \ref{prop:CongsVPLs}, the congruence $\sim_0$ must be finite index on any infinite family of $\mathit{MC}$ words, including the family $\{x^{-k},\,k\in\mathbb{N}\}$. Namely, for some infinite set $I \subset \mathbb{N}$ with $1\in I$, for each $i, j \in I, x^{-i}\sim_0x^{-j}$. Thus, since $x^{-1}xw \in \pi^{-1}(S)$, for each $i \in I$, $x^{-i}xw \in \pi^{-1}(S)$. Since $S$ is finite and $I$ is infinite, there must be some pair $i \neq j,\,i,j\in I$ such that $\pi(x^{-i}xw) = \pi(x^{-j}xw)$. Since $\pi$ is a homomorphism, this implies that for some $n,\, \pi(x^{-n})$ is the identity in $G$, contradicting the fact that $x$ has infinite order.
\end{proof}

For free groups we prove an additional result about $\mathbf{VPL}^\forall$ subgroups. 
\begin{theorem} \label{thm:sbgps}
     Let $F(X)$ be a free group over the set $X$, and $S$ a finitely generated subgroup of $F(X)$. Then $S$ is a $\mathbf{VPL}^\forall$ set if and only if it has finite index in $F$, and hence is recognisable.
\end{theorem}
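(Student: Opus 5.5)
The plan is to prove the easy direction directly, and then, for the converse, to turn finiteness of the congruence $\sim_0$ from Proposition \ref{prop:CongsVPLs} into finiteness of an orbit of left cosets of $S$.

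For the easy direction, suppose $S$ has finite index in $F=F(X)$. Then its normal core $N=\bigcap_{g\in F} gSg^{-1}$ is a finite index normal subgroup, and $S$ is a finite union of cosets $Ng_i$. The full preimage $\pi^{-1}(S)$ is then regular: it is accepted by the finite automaton whose states are the cosets of $N$ and whose transitions are right multiplication by generators (equivalently, by the finite Schreier graph of $S$). So $S$ is recognisable, and hence $\mathbf{VPL}^\forall$ by Remark \ref{remark:VPLinclusions}.

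For the converse, let $S$ be $\mathbf{VPL}^\forall$ over some partition of $\Sigma$, and set $L=\pi^{-1}(S)$. I first dispose of the non-symmetric partitions. If the partition is not symmetric, Theorem \ref{thrm:symmetricpartitions} makes $S$ recognisable, so $S=\bigcup_i Ng_i$ with $N\trianglelefteq_{f.i.} F$ by Proposition \ref{prop:recsetsareficosets}. Since $1\in S$, the subgroup $S$ contains $N$ and therefore has finite index. So I may assume the partition is symmetric; in a free group every non-trivial element has infinite order, so this just means that inverses of calls are responses and vice versa, and hence inverses of internals are internals. (If there are no calls or responses at all, then $L$ is regular and we are in the previous case.)

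The key step is to reinterpret $\sim_0$ group-theoretically. For $u\in\mathit{MC}$ and any $v$, we have $uv\in L$ iff $\pi(v)\in\pi(u)^{-1}S$. Hence $u\sim_0 u'$ iff $\pi(u)^{-1}S=\pi(u')^{-1}S$ as left cosets. Every word over $\Sigma_i\cup\Sigma_r$ lies in $\mathit{MC}$, since it has no calls. Let $M\subseteq F$ be the submonoid generated by $\Sigma_i\cup\Sigma_r$. Finite index of $\sim_0$ then says that the set $\{gS : g\in M^{-1}\}$ of left cosets is finite; that is, the orbit of the coset $S$ under the left action of the monoid $M^{-1}$ on $F/S$ is finite.

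By symmetry, $M^{-1}$ contains every internal letter and every call $x$ (because $x^{-1}\in\Sigma_r\subseteq M$), so it contains at least one letter from each pair $\{a,a^{-1}\}$. Each such letter acts as a bijection of $F/S$. The coset $S$ has a finite forward orbit under $a$, and a bijection with a finite forward orbit acts periodically on it, so $a^nS=S$ for some $n\geq 1$. Consequently $a^{-1}$ acts on the finite $M^{-1}$-orbit of $S$ as $a^{n-1}$ does, and the orbit is also closed under $a^{-1}$. (One has to check this closure for all points of the orbit, not just $S$; the same periodicity argument applies to each point.) Therefore the $M^{-1}$-orbit of $S$ equals its $F$-orbit, which is all of $F/S$. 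This gives $[F:S]<\infty$, and recognisability follows from the first direction.

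The main obstacle is this last step: passing from a finite orbit under a monoid containing only one generator from each inverse pair to finiteness of the whole coset space. The periodicity argument for bijections should handle it. Note that the hypothesis that $S$ is finitely generated does not seem to be used in this argument; I would double-check whether it is genuinely needed.
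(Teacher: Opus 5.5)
Your proof is correct, but it follows a genuinely different route from the paper's.

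\textbf{The paper's argument.} It argues by contradiction using Stallings core graphs. If $S$ has infinite index, then by Kapovich--Myasnikov some vertex $v$ of the finite core graph $C_S$ has no outgoing $a$-edge. Choosing a loop $w_1w_2$ at $\circ$ through $v$, one shows
$\pi^{-1}(S)\cap w_1\{a,a^{-1}\}^\star w_2 = w_1\,\mathit{WP}(\mathbb{Z},\{a,a^{-1}\})\,w_2$.
Quotienting by the finite languages $\{w_1\}$ and $\{w_2\}$ (Proposition~\ref{prop:quotients}) then makes the word problem of $\mathbb{Z}$ visibly pushdown, contradicting Example~\ref{ex:vpl}(iii). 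This handles every partition at once and uses only closure properties of $\mathbf{VPL}$. It needs neither Theorem~\ref{thrm:symmetricpartitions} nor the congruence characterisation. The price is that it needs a finite core graph, hence $S$ finitely generated, and it is tied to free groups.

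\textbf{Your argument.} You dispose of non-symmetric partitions via Theorem~\ref{thrm:symmetricpartitions} and the Herbst--Thomas description, using $1\in S$ to get $N\le S$. In the symmetric case, you read finiteness of $\sim_0$ on call-free words (all of which lie in $\mathit{MC}$) as finiteness of the orbit of the coset $S$ in $F/S$ under the monoid generated by $\Sigma_i\cup\Sigma_c$. Each such letter maps this finite orbit injectively into itself, so it permutes it. The orbit is therefore also closed under the inverse letters, and so it is all of $F/S$.

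\textbf{What your route gains.} Your suspicion is right: finite generation of $S$ is never used. Neither is freeness of $F$; symmetry only has to supply, for each $x\in X$, one of $\pi(x),\pi(x)^{-1}$ in that monoid. Theorem~\ref{thrm:symmetricpartitions} and the Herbst--Thomas characterisation both hold in arbitrary groups. So your argument shows more: in any finitely generated group, every $\mathbf{VPL}^\forall$ subgroup has finite index. In other words, Conjecture~\ref{conjecture1} holds for subgroups. You also prove the easy direction explicitly, which the paper leaves implicit.
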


 To prove Theorem \ref{thm:sbgps} we use `core' or `Stalling graphs' for finitely generated subgroups of free groups \cite[Prop. 3.8]{KM02}, which we recall below. A core graph can essentially be seen as a deterministic finite state automaton with a single accepting state (which coincides with the initial state) that can produce all the elements of a finitely generated subgroup of a free group as reduced words over the generating set of the ambient free group. 

Let $F(X)$ be a finitely generated free group  with generating set $X$, and let $\Sigma=X \cup X^{-1}$, as earlier in the paper.
Let $H$ be a finitely generated subgroup of $F(X)$. The \emph{core graph} of \(H\), denoted \(C_{H}\), is a finite, connected, graph with the following properties:
\begin{enumerate}[(i)]
	\item It is \(X\)-labelled and oriented. That is, any oriented edge $(v_1,v_2)$ has some label $a \in X$, which
	 means that the edge $(v_2, v_1)$ can be crossed and has label $a^{-1}$, but $(v_2, v_1)$ and $a^{-1}$ are not explicitly drawn.
	\item It is \emph{folded}: that is, no two edges incident to a vertex have the same \(X\)-label.
	\item It contains a distinguished vertex \(\circ\).
	\item The labels of all the closed paths (or loops) at \(\circ\) give exactly \(H\), and the graph \(C_{H}\) is minimal with this property.
\end{enumerate}
%All vertices in $C_H$ have degree $\ge 2$ (the degree is the number of all edges incident to a vertex, whether incoming or outgoing), except possibly $\circ$; and every vertex is the initial vertex (and the terminal vertex) of at most $m$ edges, labelled by pairwise different letters in $X$. 

%The following holds about core graphs:

%\begin{lemma}[{\cite[Prop.~8.3]{KM02}}]	\label{lem_deg}
%	Let $H$ be a finitely generated subgroup of a free group of rank $m>1$, and let $C_H$ be its core graph. 
%	Then $H$ has finite index if and only if $C_H$ has all vertex degrees equal to $2m$.

%\end{lemma}

\begin{proof}[Proof of Theorem \ref{thm:sbgps}]
   Let $C_S$ be the core graph for the subgroup $S$ and suppose $S$ has infinite index in $F(X)$. Then by \cite[Prop.~8.3]{KM02}, there is some vertex $v$ of $C_S$ and some letter $a$ such that no edge labelled $a$ leaves $v$. 

    Pick a loop at $\circ$ that goes through $v$. We say this path is $w_1w_2$, where $w_1$ is the label of the path from $\circ$ to $v$, and $w_2$ the label of the path from $v$ to $\circ$.
Now, consider the language $L=w_1\{a,a^{-1}\}^\star w_2 \cap \pi^{-1}(S).$
Since $S\in\mathbf{VPL}^\forall$ and \vpl s are closed under intersection with regular languages, the set $L$ must be \vpl. 

We claim that $L=\{w_1\alpha w_2 \mid \alpha \in WP(\mathbb{Z},\{a,a^{-1}\})\}$. To see this, suppose there is a $w\in L$ such that $\pi(w)=\pi(w_1a^kw_2)\in S$ for some $k>0$. Then a loop with label $w_1a^kw_2$ can be traced at $\circ$ in $C_S$. Since there is no outgoing edge labelled $a$ at vertex $v$ there is no incoming edge labelled $a^{-1}$ at $v$, so $w_1$ cannot end in an $a^{-1}$ and there is no cancellation between $w_1$ and $a^k$. Thus both $a^k w_2$ and $w_2$ label paths in $C_S$ between $v$ and $\circ$. Since $C_S$ is folded (i.e. deterministic), this can only happen if there an edge labelled $a$ starting at $v$. A similar argument holds for $k<0$. Thus $k=0$, and so $w=w_1 \alpha w_2$, where $\alpha$ is a word on $a, a^{-1}$ with exponent-sum of $a$ equal to $0$, which proves the claim.

The fact that $L$ is \vpl ~implies that the word problem of $\mathbb{Z}$ is \vpl, due to the closure of \vpl ~ under right and left quotients by a finite language (Proposition \ref{prop:quotients})-- contradicting Example \ref{ex:vpl}.
\end{proof}

\section{Word Equations with Language Constraints}\label{section:WEs}

The motivation for the present paper comes from studying equations in free groups with formal language constraints, in particular in relation to the recent results in \cite{Day_etal_expressivepowerofstrings}. We use the terminology `word equations' when working in a free monoid and simply write `equations' when working in a group. % the context applies to both monoids and groups we write `(word) equations'.

Let $\mathcal{V}$ be a finite collection of
variables and $A$ a finite set of letters, and consider  the word equation $U=V$ over $(A,\mathcal{V})$, where $U, V$ are words in the free monoid $(A \cup \mathcal{V})^*$.
% is a pair $(U,V)$ with $U,V\in (A\cup\Omega)^*$. Here, $U$ and $V$ are the two sides
%of an equation and we are looking to replace the variable occurrences in $U$
%and $V$ so that the two sides become equal.
A \emph{solution} to $U=V$  is a morphism
$\sigma\colon(\mathcal{V} \cup A)^*\to A^*$ that fixes $A$ point-wise and such that $\sigma(U)=\sigma(V)$. We will refer to the question of whether a given word equation has solutions as (\textbf{WordEqn}). This problem is solvable by the groundbreaking work of Makanin \cite{MR682490_makanin82} 
(and subsequent improvements).

A \emph{word equation with rational constraints} is a word equation $U=V$
together with a regular language $R_Y\subseteq A^*$ for each variable
$Y\in\mathcal{V}$. Then $\sigma\colon\mathcal{V}\to A^*$ is a
\emph{solution} if it is a solution to $U=V$ and also
satisfies $\sigma(Y)\in R_Y$ for each $Y\in\mathcal{V}$. We will refer to the question of whether a given equation has solutions satisfying rational constraints as (\textbf{WordEqn}, $\mathbf{REG^{\exists}}$), noting that in free monoids rational and regular sets coincide. This problem is decidable by \cite{MR2172984_WEsplusRATgrps}. One can analogously define word equations with context-free  or \vpl ~constraints in free monoids, or lift all definitions and questions to free groups, where we use the notation (\textbf{GpEqn}); background on these topics can be found in the surveys \cite{MR4786835_ciobanulangssurvey,ciobanu2023languagesgroupsequations}. 

In the recent paper \cite{Day_etal_expressivepowerofstrings} it was shown that considering language classes only slightly more complex than the regular ones yields undecidability.

\begin{proposition}[Theorem 4.4 of \cite{Day_etal_expressivepowerofstrings}]\label{prop:WE+langsMonoid} In a free monoid, Word Equations with \vpl ~constraints, and hence also with $\mathbf{DCF}$ or $\mathbf{CF}$ constraints, are undecidable.
\end{proposition}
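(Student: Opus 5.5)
The statement is cited from \cite{Day_etal_expressivepowerofstrings}, so within this paper it could simply be invoked. If I had to reprove it, I would reduce from an undecidable problem, namely Post's Correspondence Problem or the halting problem for two-counter machines. The intended route is to use \vpl\ constraints to force a relation between the lengths of two variables, and then use the word equation to build the computation.

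The key observation is that a \vpl\ can compare lengths across a boundary. An example is $\{a^n\#b^n\}$, with $a$ a call and $b$ a response. Combining such languages with concatenation equations of the form $Z = XY$ therefore lets us impose length constraints such as $|X| = |Y|$. Word equations with length constraints are not known to be decidable. However, richer relations can be expressed, for instance with constraints on a variable of the form $\{u_1\cdots u_k\#\,\overline{v_k}\cdots\overline{v_1}\}$.

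I would encode a PCP instance $(u_i,v_i)_{i\le k}$ as follows. Take a variable $X$ constrained to the \vpl
\[
R=\{\, c_{i_1}\cdots c_{i_m}\,\#\, r_{i_m}\cdots r_{i_1} \,\},
\]
where the $c_i$ are calls and the $r_i$ are responses, so that the nesting enforces matched index sequences. The equations are then used to transport the index sequence into two words obtained by substituting $u_i$, respectively $v_i$, and to demand that these words are equal. Since string morphisms cannot be applied inside equations, I would instead interleave each index letter with its image. That is, the constraint language contains words $c_i u_i$ on one side and $r_i$ paired with $v_i$ on the other, where the $u_i,v_i$ letters are internal. Equations then strip the marker letters by equating projections built from variables constrained to regular languages, in the style of standard reductions for word equations with length or letter-counting constraints.

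The main obstacle is realising this projection, or erasure of markers, with equations alone, since word equations cannot delete letters. I would first try a splitting trick. Write the solution word as an alternating concatenation, and use a second \vpl\ constraint (with markers as calls and responses nested against copies) to certify that a separate variable equals the marker-free copy, letter by letter. Failing that, I would fall back on directly citing \cite[Theorem 4.4]{Day_etal_expressivepowerofstrings}. The $\mathbf{DCF}$ and $\mathbf{CF}$ cases then follow immediately from the inclusions $\mathbf{VPL}\subsetneq\mathbf{DCF}\subsetneq\mathbf{CF}$.
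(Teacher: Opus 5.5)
Falling back on \cite[Theorem 4.4]{Day_etal_expressivepowerofstrings} is exactly what the paper does, since it quotes the statement without proof. Your deduction of the $\mathbf{DCF}$ and $\mathbf{CF}$ cases is also right: a \vpl\ constraint is in particular a $\mathbf{DCF}$ (hence $\mathbf{CF}$) constraint. So the citation-based argument stands. The self-contained PCP reduction does not, and it breaks exactly at the step you leave open.

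\emph{Erasure.} The markers cannot be erased ``by equating projections built from variables constrained to regular languages''. If projections were expressible with equations and regular constraints, the PCP condition itself would be: take $z\in\{c_1u_1,\dots,c_ku_k\}^+$ and $z'\in\{c_1v_1,\dots,c_kv_k\}^+$, and require equal projections onto the markers and onto the letters. That contradicts the decidability of word equations with rational constraints \cite{MR2172984_WEsplusRATgrps} quoted in the paper. So the \vpl\ constraints must do the erasure, and your sketch does not say how. Under one common partition, no computable choice of further constraints on the same word can do it. Their intersection with your $R$ is again \vpl\ (Proposition~\ref{prop:closure}) and has decidable emptiness. The letters of $u_i,v_i$ are internal in $R$ and never touch the stack.

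\emph{Order of the $v$-side.} The stack matches in LIFO order, so $R$ produces the $v$-side as $v_{i_m}\cdots v_{i_1}$. With reversed blocks it produces the reversal of $v_{i_1}\cdots v_{i_m}$. Even after erasure you are left with a reversal rather than an equality, and you give no way to undo it with equations.

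\emph{Length trick.} It has the same partition problem. The language $\{a^n\#b^n\}$ yields $|X|=|Y|$ only for $X$ over call letters and $Y$ over response letters. For $X,Y$ over a common alphabet, $\{w\#v:|w|=|v|\}$ is not \vpl\ for any partition.

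\emph{Separate partitions.} If each constraint may carry its own partition, your two matchings can be imposed as two constraints on one word. The first matches indices as calls/responses with letters internal. The second matches letters from disjoint copies as calls/responses with markers internal, writing $v_i$ reversed. PCP then reduces to emptiness of an intersection, with no erasure at all. That route, however, depends on a convention about partitions that you would have to fix explicitly and check against the cited theorem.
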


It is therefore interesting to consider \vpl ~constraints in free groups as well. On the one hand, we see in Section \ref{sec:vplforall} that $\mathbf{VPL}^\forall$ sets behave very closely to recognisable sets and if Conjecture \ref{conjecture1} holds, it would namely imply the decidability of (\textbf{GpEqn}) with $\mathbf{VPL}^\forall$ constraints. On the other hand, in the case of $\mathbf{VPL}^{red}$ sets, that is, \vpl ~sets consisting entirely of  reduced words in the free group setting, a more straightforward analysis is possible.

\begin{proposition}\label{prop:WEsplusVPL}
    In a free group, (\textbf{GpEqn}) with $\mathbf{VPL}^{red}$ constraints, and hence also with $\mathbf{CF}^\forall$ constraints, are undecidable.
\end{proposition}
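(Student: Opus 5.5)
Our approach is to reduce from the undecidability result for free monoids, Proposition \ref{prop:WE+langsMonoid}. The idea is to embed a word equation over an alphabet $A$, with \vpl\ constraints, into the free group $F(A)$. Take the positive letters $A \subset \Sigma = A\cup A^{-1}$ with the same call/internal/response partition as in the monoid instance, and put all inverse letters into $\Sigma_i$. The key point is that the positive words $A^\star \subset \Sigma^\star$ are all reduced. So a \vpl\ $R_Y \subseteq A^\star$ is still a \vpl\ over the extended partition: the automaton never reads an inverse letter, and any such letter can be sent to a sink state. Hence $R_Y$ is a $\mathbf{VPL}^{red}$ constraint in $F(A)$.

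The main step is to check that solutions are preserved in both directions. Let $U=V$ be a word equation over $(A,\mathcal{V})$ with \vpl\ constraints $R_Y\subseteq A^\star$, and view it as a group equation in $F(A)$ with the same constraints, now read as reduced words.
\begin{itemize}
\item A monoid solution $\sigma$ is directly a group solution, since equality in $A^\star$ implies equality in $F(A)$.
\item Conversely, a group solution assigns to each $Y$ a reduced word $\sigma(Y)\in R_Y\subseteq A^\star$, so every value is positive. Then $\sigma(U)$ and $\sigma(V)$ are positive words, hence reduced, and they represent the same group element. Since reduced forms in a free group are unique, they are equal as words, giving a monoid solution.
\end{itemize}
So the two instances are equisatisfiable, and decidability of (\textbf{GpEqn}) with $\mathbf{VPL}^{red}$ constraints would decide word equations with \vpl\ constraints, a contradiction.

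The main obstacle is a subtlety about what it means for the set of reduced words to be \vpl\ over a partition. The monoid theorem may use several constraint languages, and they must all be \vpl\ over a common partition of $A$; otherwise the reduction still works per constraint, since each constraint carries its own partition. I would check the statement of \cite{Day_etal_expressivepowerofstrings} for this and adapt accordingly.

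For the $\mathbf{CF}^\forall$ part, note that each $R_Y$ is a \vpl, hence context-free, set of reduced words. By Herbst's theorem (Proposition \ref{prop:Benois&Herbst}(2)), $\pi(R_Y)$ is $\mathbf{CF}^\forall$. Constraining $Y$ to lie in the set $\pi(R_Y)$ is the same as constraining its reduced form to lie in $r(\pi(R_Y))=R_Y$. Thus the same instances have $\mathbf{CF}^\forall$ constraints, and undecidability follows.
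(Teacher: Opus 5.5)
Your proposal is correct and follows essentially the same route as the paper: embed the monoid instance into $F(A)$ using that positive words are reduced, so each \vpl\ constraint inside $A^\star$ becomes a $\mathbf{VPL}^{red}$ constraint, then pass to $\mathbf{CF}^\forall$ via Herbst's theorem. You give more detail than the paper, namely the two-direction equisatisfiability check and the identity $r(\pi(R_Y))=R_Y$. The partition issue you flag is harmless, since the class $\mathbf{VPL}^{red}$ does not require the constraints to share a common partition.
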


\begin{proof}
    In the free group $F(X)$ generated by $X = \{x_1,\dots, x_n\}$ ($X$ is the basic generating set and $\Sigma$ is the inverse-closed one), the language of positive words $X^\star\subset F(X)$ is a regular language of reduced words. Namely $X^\star \in \mathbf{VPL}^{red}$. That means we can encode word equations over the free monoid $X^\star$ as equations over the free group $F(X)$ with the $\mathbf{VPL}^{red}$ constraint that solutions must be in $X^\star$. Since $X^\star$ is a set of reduced words by definition, every \vpl ~in $X^\star$ is also a set of reduced words. Namely, every \vpl ~constraint in $X^\star$ can be encoded as a $\mathbf{VPL}^{red}$ constraint in $F(X)$. Therefore decidability of (\textbf{GpEqn}) with $\mathbf{VPL}^{red}$ constraints in a free group would imply decidability of (\textbf{WordEqn}) with \vpl ~constraints in the free monoid. This contradicts Proposition \ref{prop:WE+langsMonoid} above. 
    
    Since $\mathbf{VPL}^{red}$ is contained in $\mathbf{CF}^\forall$ by Proposition \ref{prop:Benois&Herbst}, (\textbf{GpEqn}) with $\mathbf{CF}^\forall$ must also be undecidable. 
\end{proof}
Proposition \ref{prop:WEsplusVPL} leads to two natural questions.
\begin{q}\label{q1}
    If Conjecture \ref{conjecture1} does not hold, and hence $\mathbf{REG}^\forall\subsetneq\mathbf{VPL}^\forall$, is (\textbf{GpEqn}) with $\mathbf{VPL}^\forall$ constraints in free groups decidable? 
\end{q}

\begin{q}\label{q2}
    If (\textbf{GpEqn}) with $\mathbf{VPL}^\forall$ constraints is decidable in free groups, is (\textbf{GpEqn}) with $\mathbf{DCF}^\forall$ constraints also decidable? 
\end{q}

\section{Conclusions}

Visibly Pushdown Languages have not been systematically studied in group theory before, so this paper explores the connections between this class of languages and the natural sets of words one can associate to a finitely generated group. The only work on this topic we are aware of is \cite{MR3523360_NestedWP}, where a non-standard version of the word problem is considered. In \cite{MR3523360_NestedWP} the author shows that the `nested word problem' for a group $G$ over a generating set $X$ is \vpl ~ if and only if $G$ is virtually free. The \emph{nested word problem} for $G$ is a language $L$ over an alphabet obtained by taking three copies $X$ such that $L$ projects onto the standard word problem $\mathit{WP}(G,X)$ (but is usually not in a bijective correspondence with $\mathit{WP}(G,X)$, nor is it the full word problem of $G$ over the three copies of $X$).

So far, the study of regular, context-free, indexed, and $n$-counter languages within groups has been fruitful for algorithmic and structural questions. We hope that this paper can serve as a starting point for studying \vpl s in this setting as well. A key future direction would be to further explore Conjecture \ref{conjecture1}, and the questions that arose at the end of Section \ref{section:WEs}. Possibly the most important questions that this paper raises are about the boundary between decidability and undecidability of Word Equations when the constraints range between rational and context-free, as stated in Questions \ref{q1} and \ref{q2}.

Many other questions can be asked for sets in finitely generated groups beyond the word problem. For example, are there groups with \vpl ~(but not regular) normal forms, or \vpl ~geodesics? Similarly, many types of conjugacy languages have been classified from the formal language point of view (\cite{CH14,CHHR16}). Do any such conjugacy languages belong to the \vpl ~class?

Decision problems to do with membership in a subgroup, or more generally a subset, are also of interest. These membership problems are a strict generalisation of the word problem, and are intricately linked to solving equations with language theoretic constraints. In particular, membership in rational ($\mathbf{REG}^\exists$) or $\mathbf{CF}^\exists$ subsets has received attention (see \cite{Lohrey_surveyRATMR3495668} for a survey on these decision problems), and it is notable that the rational subset membership problem for a free group is decidable by \cite{benois_MR265496}, while the $\mathbf{CF}^\exists$ membership problem for a free group is undecidable (Theorem 3 of \cite{LohreyMembership_MR4786650}). It is therefore natural to ask questions of subset membership with respect to \vpl\ sets.

\begin{q}\label{q:subsetmembership}
    Given a generating set $\Sigma$ for a free group $F$, partitioned into a visibly pushdown alphabet, is the $\mathbf{VPL}^\exists$ membership problem for $(F, \Sigma)$ decidable? 
\end{q}

Note that while for rational and $\mathbf{CF}^\exists$ sets the subset membership problem is generating set independent, it is likely that in the case of $\mathbf{VPL}^\exists$ there will be dependence on the choice of both generating set and partition. This is again due to the class \vpl\ not being closed under morphisms nor inverse-morphisms. We conjecture that, for certain choices of generating set and (non-trivial) partition, the membership problem is undecidable, while for other choices it may be decidable. Since decidability of the rational subset membership problem is preserved by finite extensions (see \cite{Grunschlag_phdthesis}), and $\mathbf{CF}^\exists$ subset membership problem is decidable in abelian and virtually abelian groups (see \cite{Grunschlag_phdthesis} and \cite{LohreyMembership_MR4786650}), we may also ask, if Question \ref{q:subsetmembership} is answered positively, whether it remains decidable in finite extensions (i.e. virtually free groups). One may also formulate Question \ref{q:subsetmembership} for other groups which are not free, and ask whether decidability of $\mathbf{VPL}^\exists$ membership is preserved under finite extensions in more generality.

\begin{credits}
\subsubsection{\ackname} We thank Alex Levine and Joel Day for helpful discussions. 

This work was partially supported by the Heilbronn Institute for Mathematical Sciences (through a Small Grant) and the Deutsche Forschungsgemeinschaft (DFG, German Research
Foundation) under Germany's Excellence Strategy -- The Berlin Mathematics
Research Center MATH+ (EXC-2046/1, EXC-2046/2, project ID: 390685689).

\subsubsection{\discintname} The authors have no competing interests to declare that are
relevant to the content of this article. 
\end{credits}

\bibliographystyle{splncs04}
\bibliography{refs}
\end{document}